\documentclass[12pt]{amsart}
\usepackage[margin=2.1 cm]{geometry}
\usepackage{amsmath}
\usepackage{amsfonts}
\usepackage{amssymb}
\usepackage{xcolor}
\renewenvironment{proof}{{\bfseries Proof.}}{\qed}
\numberwithin{equation}{section} 
\theoremstyle{theorem}
\newtheorem{theorem}{Theorem}[section] 
\newtheorem{pro}[theorem]{Proposition} 
\newtheorem{cor}[theorem]{Corollary} 
\newtheorem{lemma}[theorem]{Lemma} 
\theoremstyle{definition}
\newtheorem{defn}[theorem]{Definition} 
\newtheorem{rk}[theorem]{Remark} 
\newcommand{\Rp}{\mathbb{RP}^{2}}

\newcommand{\G}{$\Gamma$ }

\newcommand{\imp}{\Rightarrow}
\newcommand{\z}{\mathbb{Z}}

\newcommand{\R}{\mathbb{R}}

\newcommand{\s}{\sigma_{1}}
\newcommand{\si}{\sigma_{2}}

\newcommand{\gp}{$\Gamma^{\prime}$ }

\newcommand{\thmref}[1]{Theorem~\ref{#1}}
\newcommand{\lemref}[1]{Lemma~\ref{#1}}

\newcommand{\corref}[1]{Corollary~\ref{#1}}

\numberwithin{equation}{section} 
\def \h{{\bf H}} 
\newcommand{\bn}{B_{n}/\langle h\rangle}

\subjclass[2020]{Primary 57M05; Secondary 57M07,20H10}
\keywords{Reversible elements , Fuchsian groups, surface groups, Seifert fibered space, Braid groups}
\usepackage[backref]{hyperref}

\begin{document}
	\title{Reversible and other generalised torsion elements in Seifert-fibered spaces}
	\author{Anushree Das  \and   Debattam Das}
		\address{Indian Institute of Science Education and Research (IISER) Mohali,
		Knowledge City,  Sector 81, S.A.S. Nagar 140306, Punjab, India}
	\email{nshrdas@gmail.com}
	\email{debattam123@gmail.com}
	
	\maketitle
	\begin{abstract}
		An element $a$ in a group $\Gamma$ is called \emph{reversible} if there exists $g \in \Gamma$ such that $gag^{-1}=a^{-1}$. The reversible elements are also known as `real elements' or `reciprocal elements' in literature. In this paper, we classify the reversible elements in Fuchsian groups, and use this classification to find all reversible elements in a Seifert-fibered group. We then apply the classification to the braid groups, particularly to the braid group on $3$ strands. We further study generalised 3-torsion elements in PSL(2,Z), and use this to analyse the existence of generalised 3-torsion elements in Seifert-fibered spaces in general, and braid groups on 3 strands in particular.
	\end{abstract}
	\section{Introduction}
	A non-trivial element $a$ in a group $ \Gamma$ is said to be `reversible' if it is conjugate to its own inverse, and the conjugating elements are called `reverser' or `reversing symmetry'. If the conjugating element is also an involution, then it is called `strongly reversible' element. It has also been called by other terms like `reciprocals' or `real elements' in other works, e.g., \cite{BR}, \cite{La} etc. In this paper our scope is limited to some properties of reversible elements, and the existence of reversible elements in some discrete groups. Reversibility has its own geometric motivations and has been studied widely in literature. For instance, in a Fuchsian group, the collection of conjugacy classes of reversible elements have a 1-1 correspondence to the conjugacy classes of infinite dihedral groups contained in that Fuchsian group.
But in general, supposing $G$ is a 3-manifold group (the fundamental group of some 3-manifold), if $G$ is torsion free then a reversible element $g$ in $G$ is represented as a loop which is homotopic to its own inverse (i.e., it will form an embedded Klein bottle with the reversing symmetry). So, finding a reversible element with a reverser in a 3-manifold is equivalent to finding an embedded Klein bottle up to homotopy inside the manifold. In this paper we are specially interested in Seifert fibered manifolds since the fundamental group of a Seifert fibered manifold is a universal extension of Fuchsian groups or non-orientable 2-orbifold groups.

 Furthermore, an element $g$ in a group $G$ is said to be a generalised $n$-torsion element if it satisfies the condition 
\[gh_1gh_1^{-1}h_2gh_2^{-1}\dots h_{n-1}gh^{-1}_{n-1}=Id.\] Generalised $n$-torsion elements are conjugacy invariant. Reversible elements are therefore also known as generalised $2$-torsion elements ( see \cite{HMT}). We also study the existence of generalised $3$-torsion elements in $\rm PSL(2,\z)$ and use it to find such elements in Seifert-fibered spaces.

A Seifert fibered manifold is a compact $3$-manifold which can be seen as a $S^1$ bundle over a $2$-dimensional orbifold as the base space, such that every fiber has a solid fibered torus neighbourhood. These spaces were first studied by Herbert Seifert in his doctoral thesis \cite{ST}, and they form an important example of $3$-manifolds. In fact, all the the compact orientable $3$-manifolds in $6$ of the $8$ possible $3$-manifold geometric structures can be seen as Seifert fibered spaces. Seifert fibered spaces have been widely studied, for instance in \cite{JS}, \cite{LR}, \cite{KR}. If we consider the space of fibers of a Seifert fibered space $M$ with a quotient topology, we get a resulting surface called the orbit surface of $M$. We denote the orbit surface by $G$. The orbit surface can also be seen as a quotient of the base space of $M$. There is a map $\phi:\pi_1(G)\to \{-1,1\}$ called the classifying homomorphism. A loop $\alpha$ on $G$ has image $\phi(\alpha)=1$ if the fiber above $\alpha$ is mapped to itself in an orientation preserving way as we follow the loop $\alpha$, and has image $\phi(\alpha)=-1$ otherwise. The classifying homomorphism plays a crucial role in determining the fiber type of a Seifert fibered space.
	
	 Given any such Seifert fibered space, this paper is concerned with checking the reversibility of the elements of $\pi_1(M)$, the fundamental group of $M$. The fundamental group of $M$ depends on the orientability of the orbit surface $G$. If $G$ is orientable, it can be shown following Seifert's work and Brin's note in \cite{Br}, that the fundamental group of $M$ is of the form
	\begin{align}\label{fo}
	 \pi_1(M) = & \langle a_i,b_i,c_i,d_i,h~|~ a_iha_i^{-1}=h^{\phi(a_i)}, 
\end{align}
	 \[b_ihb_i^{-1}=h^{\phi(b_i)}, c_ihc_i^{-1}=h,d_ihd_i^{-1}=h^{\phi(d_i)},\] 
\[	c_i^{\mu_i}=h^\beta_{i},\prod~[a_i,b_i]~\prod c_i~\prod d_ih^b=1\rangle .\]
	
If $G$ is not orientable, $\pi_1(M)$ has the presentation 
	\begin{align}\label{fn}
	 \pi_1(M) =  \langle x_i,c_i,d_i,h~|~ x_ihx_i^{-1}=h^{\phi(x_i)}, \end{align}
	\[c_ihc_i^{-1}=h,d_ihd_i^{-1}=h^{\phi(d_i)}, 	c_i^{\mu_i}=h^{\beta_i},\prod x_i^2~\prod c_i\prod d_ih^b=1\rangle .\]  
	 In both cases the $\mu_i$, $\beta_i$ and $b$ are the same as used in the representation of $M$, and $h$ represents an ordinary fiber. The $a_i$ and $b_i$ are the generators of the handlebodies of the orbit surface, the $d_i$ represent the boundaries of the orbit surface, the $x_i$ generate the crosscaps in the orbit surface, and the $c_i$ are the generators of the essential fibers. $\phi$ is the classifying homomorphism. $\langle h\rangle$ is a normal cyclic subgroup of $\pi_{1}(M)$, and so we can consider the quotient group $\pi_1(M)\slash \langle h\rangle $. If the orbit surface $G$ is orientable, this quotient is a Fuchsian group which can be written as  	
	\begin{align*}	
	 \pi_1(M)\slash \langle h\rangle  = & \langle a_i,b_i,c_i,d_i|c_i^{\mu_i}=1,\prod[a_i,b_i]~\prod c_i~\prod d_i=1\rangle  . 
	\end{align*}\label{fo}
If $G$ is non-orientable, then 
\begin{align*}	
	 \pi_1(M)\slash \langle h\rangle  = & \langle x_i,c_i,d_i~|~c_i^{\mu_i}=1,\prod~ x_i^2\prod~ c_i\prod~ d_i=1\rangle .  
	\end{align*}\label{fn}
	From the representation of $\pi_1(M)$, we can see that the reversibility of the element $h$ is dependent on the classifying homomorphism $\phi$. If there is any generator $z$ of the form $a_i, b_i,x_i,$ or $d_i$ such that $\phi(z)=-1$, then the relations in the definition of $\pi_1(M)$ give that $h$ is reversible, i.e., $zhz^{-1}=h^{\phi(z)}=h^{-1}$. Our aim  is to classify all the reversible elements of $\pi_1(M)$.  The main result of our paper is:
	\begin{theorem}\label{l1}
		If a Seifert fibered space $M$ has an orientable orbit surface. If $\phi$ is nontrivial, the reversible elements of $\pi_1(M)$ are the following:
		\begin{enumerate}
			\item the elements conjugate to any power of $h$.
			\item  the conjugates of the elements of the form  $c_i^{\frac{\mu_i}{2}}kc_j^{\frac{\mu_j}{2}}k^{-1}$ with $\phi(k)=-1$  where $\mu_i$ and $\mu_j$ are even and $k$ is any element in $\pi_1(M)$, and $\beta_{i}=\beta_{j}$.
			\item  the conjugates of the elements of the form $c_i^{\frac{\mu_i}{2}}kc_j^{\frac{-\mu_j}{2}}k^{-1}$ with $\phi(k)=1$, where $\mu_i$ and $\mu_j$ are even and $k$ is any element in $\pi_1(M)$, and $\beta_{i}=\beta_{j}$.
		\end{enumerate}
		If $\phi$ is trivial,  $\pi_1(M)$ has reversible elements as the elements conjugate to $c_i^{\frac{\mu_i}{2}}kc_j^{\frac{-\mu_j}{2}}k^{-1}$.
	\end{theorem}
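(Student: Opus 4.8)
The plan is to play the central extension
\[
1 \longrightarrow \langle h\rangle \longrightarrow \pi_1(M) \xrightarrow{\ \pi\ } \Gamma \longrightarrow 1,
\qquad \Gamma := \pi_1(M)/\langle h\rangle,
\]
against the classification of reversible elements in the Fuchsian group $\Gamma$ that we have already established. The guiding observation is that each candidate element in (2)--(3) is a product $st$ of a ``square root of a power of $h$'', where $s=c_i^{\mu_i/2}$ and $t=k\,c_j^{\pm\mu_j/2}\,k^{-1}$: the relations $c_i h c_i^{-1}=h$ and $c_i^{\mu_i}=h^{\beta_i}$ give $s^2=h^{\beta_i}$, while $k h k^{-1}=h^{\phi(k)}$ gives $t^2=h^{\mp\phi(k)\beta_j}$. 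Since $\phi(c_i)=\phi(c_j)=1$, both $s$ and $t$ commute with $h$, so the whole computation takes place over the central-like subgroup $\langle h\rangle$.

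For the sufficiency direction I would simply verify that $s$ is a reverser. Indeed $s(st)s^{-1}=s^2 t s^{-1}=(st)^{-1}$ holds if and only if $s^2=t^{-2}$, and in each of cases (2) and (3) the hypotheses on $\phi(k)$, on the sign of the $c_j$-exponent, and the equality $\beta_i=\beta_j$ are arranged so that $s^2=h^{\beta_i}=h^{\beta_j}=t^{-2}$. Conjugation-invariance of reversibility then promotes this to all the conjugates listed. For case (1), nontriviality of $\phi$ produces a generator $z\in\{a_i,b_i,d_i\}$ with $\phi(z)=-1$, whence $z h^n z^{-1}=h^{-n}$ and every conjugate of a power of $h$ is reversible; when $\phi$ is trivial, $h$ is central and no nonzero power of $h$ can be reversible, which is exactly why family (1) disappears in the trivial case.

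For necessity I would take a reversible $a$ with reverser $g$ and push down to $\Gamma$, where $\pi(g)\pi(a)\pi(g)^{-1}=\pi(a)^{-1}$. If $\pi(a)=1$ then $a\in\langle h\rangle$, landing in (1). Otherwise $\pi(a)$ is a nontrivial reversible element of the Fuchsian group, so by our Fuchsian classification it is conjugate to a product of two involutions $\bar s\,\bar t$, and every involution of $\Gamma$ is conjugate to some $\bar c_i^{\mu_i/2}$ with $\mu_i$ even. Conjugating $a$ (which preserves both reversibility and the shape of the conclusion) lets me assume $\pi(a)=\bar c_i^{\mu_i/2}\,\bar k\,\bar c_j^{\mu_j/2}\bar k^{-1}$, so that $a=c_i^{\mu_i/2}\,k\,c_j^{\mu_j/2}\,k^{-1}\,h^{m}$ for some $m\in\mathbb{Z}$. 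Projecting the reverser, $\pi(g)$ may be taken to be $\bar s$ (a reflection of the infinite dihedral group $\langle\bar s,\bar t\rangle$), so $g=c_i^{\mu_i/2}h^{r}$; substituting into $gag^{-1}=a^{-1}$ and using that $h$ commutes with $s$ and $t$, the power $h^{r}$ of the reverser cancels and the equation collapses to a single relation $\beta_i+\phi(k)\beta_j+2m=0$ in $\langle h\rangle$. Reading off the two possibilities for $\phi(k)$, together with the sign of the $c_j$-exponent, then reproduces exactly the trichotomy (2)--(3) (and the single surviving family when $\phi$ is trivial, where $\phi(k)=-1$ cannot occur).

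The main obstacle is this last step: the $\langle h\rangle$-bookkeeping of the central extension. One must check that among all lifts of the reflections in $\langle\bar s,\bar t\rangle$ the reversibility equation in $\pi_1(M)$ really does reduce to the single scalar relation $\beta_i+\phi(k)\beta_j+2m=0$, and then that the auxiliary central power $h^{m}$ can be normalised away into the stated conjugacy class, leaving precisely $\beta_i=\beta_j$. Handling the borderline case in which $\pi(a)$ is itself an involution (so that the ``product of two involutions'' is degenerate), and verifying that no spurious reversible elements are introduced by the choice of lift, is where the argument needs the most care.
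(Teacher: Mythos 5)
Your proposal is correct and takes essentially the same route as the paper's proof: sufficiency by the direct central computation $s^{2}=t^{-2}$ in $\langle h\rangle$ (note your intermediate formula $t^{2}=h^{\mp\phi(k)\beta_j}$ has the sign flipped, though your final equality $s^{2}=h^{\beta_i}=h^{\beta_j}=t^{-2}$ is the correct one), and necessity by projecting the element and its reverser to the Fuchsian quotient, invoking the involution/product-of-two-involutions classification there, and doing the $\langle h\rangle$-bookkeeping on lifts. The two delicate points you flag are handled in the paper exactly as you anticipate -- a lift of a conjugate of $\bar{c}_i^{\,\mu_i/2}$ fails to be reversible because the reversibility equation collapses to $h^{\beta_i}=1$, and the choice of reverser is immaterial since all reversers downstairs have the form $x^{l}c_i^{\mu_i/2}$ -- while your explicit tracking of the central lift ambiguity $h^{m}$ is, if anything, more careful than the paper, which computes only with the $m=0$ lift.
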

	
	We also similarly classify the reversible elements in case of Seifert fibered spaces with non-orientable orbit surfaces.

	In \textbf{Section 2}, we study the fundamental groups of two-dimensional orbifolds, and classify the reversible elements of such groups. This classification is then applied to the fundamental groups of any Seifert-fibered space in \textbf{Section 3}. \textbf{Section 4} then discusses an application of the results of \textbf{Section 3} on the fundamental group of $B_3$, the braid group on $3$ strands. The possibility of using the same methods for any $\bn$  is also discussed there. In \textbf{Section 5}, we study the existence of generalised $3$-torsion elements in Seifert-fibered spaces, and apply it in $\pi_1(B_3)$ to find such elements. We also find conditions for the existence of generalised $n$-torsion elements.
	
	 After completion of the first draft, the authors were informed by Prof. Masakazu Teragaito regarding the article \cite{HMT} written by Himeno, Motegi, and Teragaito, where they classify 3-manifolds whose fundamental groups admit generalised torsion elements of order two, and further classify such elements. Their proof is topological and completely different from the proof in the \textbf{Section 3} of this paper. The two articles were written completely independently and at around the same time. The proof in \cite{HMT} crucially uses important results by Jaco-Shalen and Hass, while the techniques in this article are simpler in comparison. 
	\subsection*{Acknowledgment}
The authors would like to acknowledge and thank Prof. Masakazu Teragaito for his helpful comments and for pointing out an error in the first draft. The authors further thank Prof. Krishnendu Gongopadhyay and Dr. Soma Maity for helpful feedback. A. Das acknowledges full support from an NBHM research fellowship during this work. D. Das acknowledges full support from a UGC-NFSC research fellowship during this work. 
	\section{Reversiblity in Fundamental groups of the surfaces}
	
	\subsection{Reversible classes of Fuchsian group}
		We can see the classifications of reciprocal elements for any finite type Fuchsian groups. We have discussed the reciprocal elements of Hecke groups in \cite{DG}, which is a particular class of Fuchsian groups.
	
	From now on, the hyperbolic axis for an hyperbolic element $ h $ is written as $ I_{h} $.
	\begin{theorem}\label{thm3}
		Let $\Gamma$ be an arbitrary finitely generated Fuchsian group. Let $g \in \Gamma$. Then the following are equivalent.
		\begin{enumerate}
			\item $g$ is reciprocal. 
			\item  $g$ is either an involution or conjugate to a hyperbolic element $h$ such that the hyperbolic axis of $h$ passes through some of the even order vertices of the fundamental domain of the group \G.
		\end{enumerate}  
	\end{theorem}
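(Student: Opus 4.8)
The plan is to work geometrically, viewing $\Gamma$ as a discrete group of orientation-preserving isometries of $\mathbb{H}^2$ and using the trichotomy of its elements into elliptic, parabolic and hyperbolic. Since reciprocality is a conjugacy invariant, I may freely replace $g$ by a conjugate. I will treat the three types in turn and show that the only reciprocal elements are the involutions (order-$2$ elliptics) and the hyperbolics described in (2).

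First I would dispose of the cases that cannot occur. If $g$ is elliptic of order $n>2$ with fixed point $p$, then any reverser $k$ must carry $\mathrm{Fix}(g)=\{p\}$ to $\mathrm{Fix}(g^{-1})=\{p\}$, so $k$ lies in the stabiliser $\Gamma_p$; but elliptic stabilisers in a Fuchsian group are finite cyclic, hence abelian, forcing $kgk^{-1}=g\neq g^{-1}$, a contradiction. If $g$ is parabolic with fixed point $\xi\in\partial\mathbb{H}^2$, the same reasoning puts $k$ in the stabiliser of $\xi$; in a discrete group this stabiliser is the abelian maximal parabolic subgroup, since no hyperbolic element can share a fixed point with a parabolic and no orientation-preserving involution fixes a boundary point, so again $kgk^{-1}=g$. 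Thus parabolics and elliptics of order $>2$ are never reciprocal, whereas an order-$2$ elliptic is an involution and equals its own inverse; this accounts for the ``involution'' alternative in (2).

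The heart of the argument is the hyperbolic case. Here I would use that $g$ and $g^{-1}$ share the geodesic axis $I_g$ but induce opposite orientations on it. If $kgk^{-1}=g^{-1}$, then $k(I_g)=I_g$ and $k$ interchanges the two endpoints of $I_g$; an orientation-preserving isometry of $\mathbb{H}^2$ that preserves a geodesic setwise and reverses its endpoints must be the half-turn (rotation by $\pi$) about some point $p\in I_g$, and is therefore an involution of $\Gamma$ fixing $p$. Conversely, the half-turn about any point of $I_g$ conjugates $g$ to $g^{-1}$. Hence $g$ is reciprocal if and only if $\Gamma$ contains an involution whose fixed point lies on $I_g$. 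Because an involution lies in the cyclic stabiliser of its fixed point, that point is exactly an elliptic fixed point of even-order stabiliser. Every elliptic fixed point is $\Gamma$-equivalent to a vertex of the fundamental domain, so conjugating $g$ by an element carrying $p$ to that vertex produces a hyperbolic $h$, conjugate to $g$, whose axis passes through an even-order vertex — precisely condition (2). The reverse implication runs the same chain backwards: from an even-order vertex on the axis one extracts the needed involution as the suitable power of the elliptic generator of its stabiliser, yielding (2)$\Rightarrow$(1).

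I expect the main obstacle to be the clean geometric identification in the hyperbolic case, namely proving rigorously that the only orientation-preserving isometries fixing a geodesic setwise and swapping its endpoints are the half-turns centred on it, together with the bookkeeping that converts ``involution fixing a point of $I_g$'' into ``even-order vertex of the fundamental domain lying on the axis,'' including the conjugation that moves an arbitrary elliptic fixed point onto a genuine vertex of the chosen fundamental polygon. A secondary point of care is that all reversers must be found inside $\Gamma$ rather than in $\mathrm{PSL}(2,\mathbb{R})$, which is automatic here since both the half-turn and the elliptic stabiliser elements produced are themselves elements of $\Gamma$.
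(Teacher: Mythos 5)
Your proposal is correct, and its geometric core coincides with the paper's: the decisive fact in both is that a reverser of a hyperbolic element must preserve the axis, swap its endpoints, and hence be a half-turn ($\pi$-rotation) centred at a point of the axis, which translates into an even-order vertex of the tessellation lying on the axis, with the converse obtained by extracting the involution $k^m$ from the cyclic stabiliser of such a vertex. The differences are ones of completeness rather than of route. First, the paper's proof is not self-contained: it splits the hyperbolic case into Lemma~\ref{lem5} and Lemma~\ref{lem6} and imports from \cite[Lemma 2.2]{DG} the facts that reciprocators preserve the axis, interchange its endpoints, and are involutions, whereas you derive the half-turn characterisation directly by restricting the reverser to the axis (an orientation-reversing isometry of $\mathbb{R}$ has a fixed point, and an elliptic preserving a geodesic through its fixed point with reversed orientation must rotate by $\pi$). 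Second, the theorem quantifies over all $g\in\Gamma$, yet the paper's proof of Theorem~\ref{thm3} merely combines the two hyperbolic lemmas; your explicit elimination of parabolics and of elliptics of order $>2$ via the abelian stabilisers of their fixed points fills a step the paper leaves implicit (presumably deferring to \cite[Theorem 1.1]{DG}). Third, in the converse direction the paper assumes $g$ primitive and then passes to powers, while you correctly observe this detour is unnecessary: since hyperbolics in $\mathrm{PSL}(2,\mathbb{R})$ are pure translations along their axes, a half-turn about any point of $I_g$ conjugates every hyperbolic with that axis to its inverse. One shared convention you should make explicit, as the paper also does not: an order-$2$ elliptic fixed point may sit at the midpoint of an edge of the fundamental polygon rather than at a corner, so ``even order vertex'' must be read with the standard convention that such fixed points are declared vertices (of angle $\pi$) of the tessellation.
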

	To prove this theorem, we first need to prove the following lemmas.
	\begin{lemma}\label{lem5}
		If a hyperbolic element $ h $ in the Fuchsian group $ \Gamma $ is reciprocal, then the fixed point of the reciprocator of the reciprocal elements lies on the hyperbolic axis of $ h $, i.e., on $ I_{h} $.
	\end{lemma}
	\begin{proof}
		$\Gamma$ is in $ \rm PSL(2,\R) $, and so we will conjugate \G by a M\"{o}bius map $f $ such that in the new group after conjugation, $ h $ will be a hyperbolic element in the Poincar\'{e} disk where the hyperbolic axis of $ h $ passes through $ (0,0) $. Now, the reciprocator for $ h $ is $ g $, which is an involution having fixed point $ z $. So, let us suppose $ z $ does not lie on $ I_{h} $ and $ I_{h} $ is a straight line passing through the origin. Then, $ z $ lies one side of the axis. In our earlier work {\cite[Lemma 2.2]{DG} }we have shown that in the Fuchsian group, the reciprocators of a reciprocal element fix the hyperbolic axis of that particular hyperbolic element and interchange the fixed points of hyperbolic axis. Now, we apply the map $ g $ to the disk. This map is a $\pi$-rotation along $ I_{h} $ of the disk, and therefore the point $ z $ must change its position. This is contradiction. So, the fixed point of $ g $ must lie on $ I_{h} $. 
	\end{proof}
	\begin{lemma}\label{lem6}
		A hyperbolic element $ g $ in the Fuchsian group \G is  reciprocal if and only if it passes through some of the even-order vertices of the tessellation which is done by the fundamental domain of \G.
	\end{lemma}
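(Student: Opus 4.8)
The plan is to prove both implications by connecting reciprocators of the hyperbolic element $g$ with half-turns (order-two elliptic elements) of $\Gamma$ whose centre lies on the axis $I_g$, and then to identify the possible centres of such half-turns with the even-order vertices of the tessellation.

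For the forward direction, I would start from the assumption that $g$ is reciprocal. Invoking \cite[Lemma 2.2]{DG} and \lemref{lem5}, any reciprocator $\sigma$ is an orientation-preserving isometry that preserves $I_g$ setwise, interchanges its two endpoints, and has fixed point $z$ on $I_g$. The next step is to observe that an orientation-preserving isometry reversing the orientation of a geodesic must be a $\pi$-rotation about a point of it, so $\sigma$ is an involution fixing $z$. I would then argue that $z$, being fixed by a nontrivial elliptic element of $\Gamma$, is a vertex of the tessellation, and that its stabilizer, containing the order-two element $\sigma$, has even order; hence $I_g$ passes through an even-order vertex.

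For the converse, I would assume $I_g$ passes through an even-order vertex $v$. Since point stabilizers in a Fuchsian group are finite cyclic, the stabilizer $\Gamma_v$ is cyclic of even order and thus contains a unique involution $\sigma$, the half-turn about $v$. Because $v \in I_g$, this half-turn sends $I_g$ to itself and swaps its two boundary points, i.e. it interchanges the attracting and repelling fixed points of $g$. The final step is to check that conjugation by the isometry $\sigma$ preserves the translation length while reversing the direction along $I_g$, so $\sigma g \sigma^{-1}$ is the hyperbolic element with axis $I_g$, equal translation length, and opposite direction, namely $g^{-1}$; this exhibits $\sigma$ as a reverser and proves $g$ reciprocal.

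I expect the main obstacle to be this last verification in the converse: establishing the exact identity $\sigma g \sigma^{-1} = g^{-1}$, rather than merely that the two elements share the axis $I_g$. This rests on the standard facts that an isometric conjugate of a hyperbolic element is again hyperbolic with the image axis and the same translation length, and that swapping the two fixed points of a hyperbolic element inverts it; normalizing so that $I_g$ is a diameter of the Poincar\'{e} disk through the origin and $\sigma$ is the Euclidean $\pi$-rotation reduces this to a short computation. A secondary point to keep clean is the equivalence `elliptic fixed point with even-order stabilizer $\Leftrightarrow$ even-order vertex of the tessellation', which holds because point stabilizers are finite cyclic and contain an involution exactly when their order is even.
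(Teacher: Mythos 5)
Your proof follows essentially the same route as the paper: the forward direction combines \lemref{lem5} and \cite[Lemma 2.2]{DG} to place an involutive reverser's fixed point on $I_{g}$ at an even-order vertex of the tessellation, and the converse uses the half-turn in the (cyclic, even-order) stabilizer of that vertex --- the paper's $k^{m}$ with $k^{2m}=Id$ --- to conjugate $g$ to $g^{-1}$. Your closing determination argument (same axis, same translation length, swapped fixed points force $\sigma g \sigma^{-1}=g^{-1}$) is in fact slightly cleaner than the paper's version of that step, which appeals to primitivity of $g$ even though, as your normalization computation shows, primitivity is not needed there.
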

	\begin{proof}
		We first show that if $ g $ is reciprocal, then $ I_{g} $ passes through at least one even order vertex in the fundamental domain. First, the fixed points of the reciprocator of $ g $ will always be a vertex of the domain. We know that $ I_{g} $ passing through a fixed domain represents a $ z $-class of that Fuchsian group. {And if it is reciprocal then there is a fixed point of one of the reciprocator of $ g $ that lies on $ I_{g} $ by \ref{lem5}, and it is an even ordered-vertex of the fundamental domain (from the definition of fundamental domain)}. This proves one side of the statement.

		Conversely, if the axis of the primitive hyperbolic element $ g $ passes through an even-order vertex $ z $ of the tiling which is done by the fundamental domain, then let $ k $ be the elliptic finite order element in the Fuchsian group such that, $ k(z)=z $ and $ k^{2m}=Id $. {So, $ k^{m} $ is an involution and it is a is $ \pi $-rotation. Then, it will interchange the fixed points of $ g $, and so $ k^{m}gk^{m}=g^{-1} $ since $ g $ is primitive.} This shows that any non-trivial power of $ g $ is reciprocal. This proves the lemma.
		
	\end{proof}
	\subsection{Proof of \thmref{thm3}}
	
	Combining \lemref{lem5} and \lemref{lem6}, we get the proof of the theorem.
\qed

	\subsection{Reversible classes of non-orientable surface fundamental groups}
	Let $S$ be a compact connected non-orientable surface. Due to the classification of surfaces, the fundamental group of $S$ can be presented in the form
\begin{align}\label{nf}
		\langle x_{1},x_{2},\dots ,x_{k},d_{1},d_{2},\dots,d_{m}|\Pi x^{2}_{i}\Pi d_{i}=1\rangle
\end{align}
	where $k$ represents number of cross-caps and the $d_{i}$ represent the boundary components. In such cases, we have the following lemma.
	
	\begin{lemma}
	If	$S$ is a compact, connected non-orientable surface having the fundamental group \ref{nf}, then there are no reversible elements in $\pi_{1}(S)$ except when $S$ is the Klein-bottle or $ \Rp $.
	\end{lemma}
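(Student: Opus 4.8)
The plan is to separate the surfaces by whether they have boundary and, in the closed case, by the number of crosscaps, and to reduce everything to a single algebraic principle: \emph{a torsion-free group containing no subgroup isomorphic to $\z^2$ has no reversible elements.} The two exceptions fall out naturally — for $\Rp$ the group is $\z/2$, whose only non-trivial element is an involution and hence trivially reversible, while for the Klein bottle the group is isomorphic to $\langle a,g \mid gag^{-1}=a^{-1}\rangle$, in which $a$ is visibly reversed by $g$. Both of these groups violate the hypothesis of the principle: $\z/2$ has torsion, and the Klein-bottle group contains $\z^2$.

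First I would establish the algebraic principle. Suppose $a \neq 1$ is reversible in a group $G$, say $gag^{-1}=a^{-1}$. Conjugating twice gives $g^2ag^{-2}=ga^{-1}g^{-1}=(gag^{-1})^{-1}=a$, so $g^2$ commutes with $a$; as it also commutes with $g$, the element $g^2$ is central in $\langle a,g\rangle$, and hence $\langle a,g^2\rangle$ is abelian. If $G$ is torsion-free I claim this subgroup is free abelian of rank $2$: any relation $a^pg^{2q}=1$ gives $a^p=g^{-2q}$, and conjugating by $g$ replaces the left-hand side by $a^{-p}$ while fixing the right-hand side, so $a^{-p}=a^{p}$, forcing $a^{2p}=1$, hence $p=0$, and then $g^{2q}=1$, hence $q=0$. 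Therefore $\langle a,g^2\rangle\cong\z^2\le G$, proving the principle.

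Next I would run the remaining cases. If $S$ has non-empty boundary ($m\ge 1$), the relation $\prod x_i^2\prod d_i=1$ lets us eliminate one generator, so $\pi_1(S)$ is a free group; free groups are torsion-free and contain no $\z^2$, so the principle applies and there are no reversible elements. If $S$ is closed ($m=0$) with $k\ge 3$ crosscaps, then $\chi(S)=2-k<0$, so $S$ is aspherical and $\pi_1(S)$ is torsion-free. To see it contains no $\z^2$, pass to the orientation double cover, a closed orientable surface of genus $k-1\ge 2$ whose group $K\trianglelefteq\pi_1(S)$ has index $2$ and is a cocompact torsion-free Fuchsian group, in which every abelian subgroup is cyclic; any $A\cong\z^2\le\pi_1(S)$ would meet $K$ in a subgroup $A\cap K$ of index at most $2$, hence still of rank $2$, contradicting that $K$ contains no $\z^2$. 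Thus $\pi_1(S)$ satisfies the principle and has no reversible elements. The leftover closed cases $k=1$ and $k=2$ are exactly $\Rp$ and the Klein bottle, treated above.

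The main obstacle is the closed case $k\ge 3$: one must verify that $\pi_1(S)$ is simultaneously torsion-free and free of $\z^2$, which is where geometry enters. I would handle this through the orientation double cover as above, relying on the standard fact that closed orientable hyperbolic surface groups have only cyclic abelian subgroups (equivalently, they are word-hyperbolic). Everything else — the algebraic principle, freeness in the bounded case, and the identification of the two exceptional surfaces — is routine, and it is precisely the presence of $\z^2$ in the Klein-bottle group and of torsion in $\z/2$ that singles out the Klein bottle and $\Rp$ as the only non-orientable surfaces admitting reversible elements.
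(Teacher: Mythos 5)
Your proof is correct, but it takes a genuinely different route from the paper's. The paper argues in two short steps: after exhibiting the reversible elements in $\pi_1(\Rp)=\z_2$ and in the Klein bottle group, it disposes of the case $k>2$ by appealing to biorderability --- a loop freely homotopic to its inverse would make $\pi_{1}(S)$ non-biorderable, whereas surface groups other than those of $\Rp$ and the Klein bottle are biorderable (a nontrivial theorem, due to Rolfsen--Wiest, which the paper invokes without citation; in a biordered group $a>1$ forces $gag^{-1}>1$, so no nontrivial element is conjugate to its inverse). You instead prove a self-contained algebraic principle --- a reversible $a\neq 1$ with reverser $g$ forces either torsion or $\langle a,g^{2}\rangle\cong\z^{2}$ --- and then verify that the remaining groups are torsion-free and contain no $\z^{2}$: free groups in the bounded case, and in the closed case $k\geq 3$ via the genus-$(k-1)$ orientation double cover together with the standard fact that torsion-free cocompact Fuchsian groups have only cyclic abelian subgroups (your index argument that $A\cap K$ still has rank $2$ is correct). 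What your route buys: it replaces a comparatively deep orderability theorem with elementary facts, it treats the bounded case $m\geq 1$ explicitly (the paper's connected-sum decomposition is stated only for closed surfaces, and its proof never separately addresses boundary), and it explains conceptually why the exceptions are exactly these two surfaces --- torsion in $\z_{2}$, and a $\z^{2}$ inside the Klein bottle group. What the paper's route buys is brevity: biorderability kills all reversible elements in one stroke. One micro-gap in your write-up: when you deduce $q=0$ from $g^{2q}=1$ you implicitly need $g^{2}\neq 1$; this is immediate (torsion-freeness would give $g=1$, whence $a=a^{-1}$, so $a^{2}=1$ and $a=1$, contradicting $a\neq 1$), but it deserves a line, since otherwise $\langle a,g^{2}\rangle$ could a priori be cyclic.
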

\begin{proof}
By the classification of non-orientable surfaces, we know that $ S $ is  homeomorphic to $ \underbrace{\Rp\#\Rp\#\dots \# \Rp} _{k\ times} $. We also know that a loop $ l $ is reversible iff it will freely homotope to its own inverse, that is, they will form a Klein bottle. So, if $ S $ is itself a Klein bottle, then any power of the generating loop is reversible. If $ S $ is $ \Rp $, then the fundamental group will be $ \z_{2} $, and the generator is reversible.

If $ k > 2 $, there is no finite order element in the fundamental group. Then our claim is that there are no more reversible elements in $ S $. If on the contrary there exists any loop $ l $ that homotopes to its own inverse, then $\pi_{1}(S)$ will no longer be a biorderable group. This will lead us to a contradiction. Hence, this proves the lemma.


\end{proof}\\
Now we classify the reversible elements in the fundamental group of a non-orientable two-dimensional orbifold $\Gamma^{\prime}$ {which is neither $\Rp$ nor Klein bottle}. 
	\begin{lemma}\label{main}
	Infinite order reversible elements in \gp are strongly reversible elements, and every reverser is an involution. 
	\end{lemma}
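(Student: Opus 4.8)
The plan is to work geometrically, realising \gp as a discrete subgroup of $\mathrm{Isom}(\mathbb{H}^{2})$. Because we have excluded $\Rp$ and the Klein bottle, the underlying orbifold is hyperbolic, so this realisation is available, and, the orbifold being non-orientable, \gp genuinely contains orientation-reversing isometries. First I would record the classification of the relevant elements: an infinite-order $g\in\Gamma'$ is hyperbolic, parabolic, or a glide reflection, and its fixed-point set on the boundary circle $\partial\mathbb{H}^{2}$ is either a single point (parabolic) or the unordered pair of endpoints of its axis $I_{g}$ (hyperbolic or glide reflection). The governing reduction is that if $kgk^{-1}=g^{-1}$, then $k$ carries the boundary fixed-point set of $g$ onto that of $g^{-1}$, which is the same set; moreover conjugation preserves the attracting/repelling dichotomy, so in the two-point case $k$ must interchange the endpoints. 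The whole argument then becomes an analysis of which isometries can realise the required boundary behaviour.

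For $g$ hyperbolic or a glide reflection, any reverser $k$ swaps the two endpoints $p_{+},p_{-}$ of $I_{g}$. The crucial elementary lemma I would establish is that any isometry of $\mathbb{H}^{2}$ interchanging the two endpoints of a geodesic is an involution: if orientation-preserving it is a half-turn about a point of $I_{g}$ (this is precisely the situation of \lemref{lem5}), and if orientation-reversing it is the reflection in the geodesic perpendicular to $I_{g}$ through that point. The point that needs care is ruling out glide reflections as reversers: if $k$ swaps $p_{\pm}$ then $k^{2}$ fixes both, whereas a glide reflection has as its only boundary fixed points the endpoints of its own axis, which would force it to fix rather than swap $p_{+}$ and $p_{-}$. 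This shows directly that every reverser in these cases is an involution, and since a reverser exists by hypothesis, $g$ is strongly reversible.

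The parabolic case is where I expect the main obstacle to lie, since here $g$ has only the single boundary fixed point $p$, and fixing $p$ does not by itself force $k$ to have order two. Normalising $p=\infty$ in the upper half-plane so that $g\colon z\mapsto z+t$, I would use the structure of the stabiliser of $\infty$: an orientation-preserving element of this stabiliser conjugates $g$ only to a positive rescaling of itself and so can never produce $g^{-1}$, whence any reverser must be orientation-reversing, of the form $z\mapsto -\lambda\bar z+\mu$ with $\lambda>0$. A short computation gives $kgk^{-1}\colon z\mapsto z-\lambda t$, so matching with $g^{-1}$ forces $\lambda=1$, making $k$ a reflection in a vertical geodesic, hence an involution. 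Discreteness of \gp is used here to guarantee that these are the only infinite-order types occurring and that the relevant centraliser is cyclic.

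Combining the three cases yields both assertions simultaneously: every reverser of an infinite-order reversible element is an involution, and the existence of such a reverser then certifies strong reversibility. If one prefers a uniform packaging rather than the case-by-case conclusion, I would observe that the set of reversers is the coset $k_{0}\,C_{\Gamma'}(g)$ of a fixed involutive reverser $k_{0}$, that $C_{\Gamma'}(g)$ is abelian (its elements all fix the endpoints of $I_{g}$ and act by translation along the axis), and that $k_{0}$ inverts $C_{\Gamma'}(g)$; then for every reverser $k=k_{0}c$ one computes $k^{2}=k_{0}ck_{0}c=(k_{0}ck_{0}^{-1})c=c^{-1}c=\mathrm{Id}$. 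Either route completes the proof.
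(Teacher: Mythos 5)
Your proof is correct, and its engine is the same as the paper's: realise \gp inside the isometry group of $\h^{2}$, observe that a reverser must carry the boundary fixed-point set of $g$ to that of $g^{-1}$ while exchanging attracting and repelling points, and conclude that an isometry interchanging the two endpoints of a geodesic is an involution (a half-turn on $I_{g}$ or a reflection in a perpendicular geodesic). The differences are in the case handling, and in both places your version is fuller. For orientation-reversing $g$ the paper uses a slicker reduction: $g^{2}$ is orientation-preserving and has the \emph{same} reverser $b$, so $b^{2}=1$ follows from the orientation-preserving case; you instead run the endpoint-swapping analysis directly on glide reflections, including the genuinely necessary observation that a glide reflection can never swap the endpoints (its square is a hyperbolic along its own axis, so the swapped pair would have to be that axis' endpoints, which the glide reflection fixes). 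More significantly, the paper disposes of parabolics with the unproved parenthetical that an orientation-preserving reversible element ``will not be a parabolic element''; your upper half-plane computation --- any reverser fixes the unique boundary fixed point, orientation-preserving elements of the stabiliser only rescale the translation length positively, and $z\mapsto -\lambda\bar z+\mu$ gives $kgk^{-1}\colon z\mapsto z-\lambda t$, forcing $\lambda=1$ and $k$ a reflection --- supplies exactly the case the paper skips, and it matters, since parabolic reversible elements do occur in discrete subgroups of $\mathrm{PGL}(2,\R)$ (e.g.\ the group generated by $z\mapsto z+1$ and $z\mapsto-\bar z$). Your closing uniform packaging, with reversers forming the coset $k_{0}C_{\Gamma'}(g)$ and $k^{2}=(k_{0}ck_{0}^{-1})c=c^{-1}c=\mathrm{Id}$, is a clean structural summary that the paper does not give; the cost is that it presupposes $k_{0}$ inverts the full centraliser, which your case analysis has in effect already verified.
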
 
	The proof is similar to the proof {of} \cite[Lemma 3.4 ]{DG}.\\
	\begin{proof}
		The group \gp is a discrete subgroup of $\rm PGL(2,\R)$. Let $r$ in $\h^{2}/\Gamma^{\prime}$ be a reversible element in \gp. Then there is an element $b$ in \gp such that $b^{-1}rb=r^{-1}$. At first we consider that $r$ is orientation preserving isometry in $\h^{2}$ (which implies that will not be a parabolic element). Then, $b$ either fixes the fixed points of $r$ or reflects the fixed points to each other. If $b$ fixes the set of fixed points of $r $ then it will either commute with $r$ or reverse $ r$. In the case where it commutes with $r$ {it leads us to a contradiction that $b$ is not a reverser}, and in the case where it is reversing, $b$ will be an involution. If $b$ interchanges the fixed points, then $b$ must be an involution. So, every reverser is an involution in \gp.
		
		In  $r$ is a orientation reversing element, then $r^{2}$ will be orientation preserving element with the same reverser $b$. From the previous case, $b^{2}=1$. Hence, in any case, $r$ will be strongly reversible. 
	\end{proof}
	
	\begin{theorem}
		The infinite order reversible elements in \gp are conjugate to $c_i^{\frac{\mu_i}{2}}kc_j^{\frac{\mu_j}{2}}k^{-1}$ where $\mu_{i}$ and $\mu_{j}$ are even and $k\in \Gamma^{\prime}$.
	\end{theorem}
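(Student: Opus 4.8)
The plan is to reduce everything to a statement about products of two involutions, exploiting \lemref{main}. Since \lemref{main} already tells us that an infinite order reversible element $r$ in \gp is strongly reversible and that every reverser is an involution, I would fix a reverser $b$ with $b^{2}=1$ and $brb^{-1}=r^{-1}$, and set $t:=br$. A one-line computation shows $t^{2}=brbr=(brb)r=r^{-1}r=1$ and $trt^{-1}=(br)\,r\,(br)^{-1}=brb^{-1}=r^{-1}$, so $t$ is again an involution reversing $r$, and $r=bt$ is a product of two involutions. The whole problem then becomes: what do the involutions of \gp look like, and which of their products have infinite order?

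The core step is to identify the involutions. I would first argue that \gp has no orientation-reversing involution: the orientation-reversing elements produced by the crosscap generators $x_i$ are glide reflections (infinite order), and the presentation of \gp carries no reflection (mirror) generator, so \gp contains no reflection. Hence every involution is an orientation-preserving elliptic element, i.e. a $\pi$-rotation of $\h^{2}$, and by the standard description of torsion in a Fuchsian group (every finite order element is conjugate into one of the cyclic groups $\langle c_i\rangle$) such an involution is conjugate to $c_i^{\frac{\mu_i}{2}}$ for some $i$ with $\mu_i$ even. Two consequences follow at once. First, $r=bt$ is orientation-preserving, and being a product of two $\pi$-rotations about distinct points it is a translation along the geodesic joining their centres, hence hyperbolic rather than parabolic (the centres must be distinct, since $r$ has infinite order). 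Second, no infinite order \emph{orientation-reversing} element can be reversible, since that would force one of the two involutions $b,t$ to be a reflection.

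It remains to put $r$ into the stated normal form. Both $b$ and $t$ reverse the hyperbolic element $r$, so by \lemref{lem5} their fixed points lie on the axis $I_{r}$, and by the argument of \lemref{lem6} each is a $\pi$-rotation about an even order vertex of the tessellation, so $b$ is conjugate to some $c_i^{\frac{\mu_i}{2}}$ and $t$ to some $c_j^{\frac{\mu_j}{2}}$. Writing $b=g\,c_i^{\frac{\mu_i}{2}}\,g^{-1}$ and replacing $r$ by the conjugate $g^{-1}rg$ (which leaves its conjugacy class, and hence the conclusion, unchanged), I may assume $b=c_i^{\frac{\mu_i}{2}}$; then $t=k\,c_j^{\frac{\mu_j}{2}}\,k^{-1}$ for some $k$ in \gp, so $r=c_i^{\frac{\mu_i}{2}}\,k\,c_j^{\frac{\mu_j}{2}}\,k^{-1}$, as claimed. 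The converse is immediate: any element of this shape is the product of the two involutions $c_i^{\frac{\mu_i}{2}}$ and $k c_j^{\frac{\mu_j}{2}}k^{-1}$, hence strongly reversible, and it has infinite order precisely when the two rotation centres are distinct.

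The hard part, I expect, is the middle paragraph: rigorously excluding orientation-reversing involutions and confirming that every involution is conjugate to an even order cone generator $c_i^{\frac{\mu_i}{2}}$. This is exactly where the hypothesis that \gp is neither $\Rp$ nor a Klein bottle, together with the precise form of its presentation, must enter, and it is the point that genuinely distinguishes the non-orientable orbifold case from the orientable Fuchsian case of \thmref{thm3}.
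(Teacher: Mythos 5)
Your proposal is correct and takes essentially the same route as the paper: the paper's own one-line proof likewise combines \lemref{main} with the standard fact that the only torsion in \gp consists of conjugates of powers of the $c_i$, so that a strongly reversible infinite order element decomposes as a product of two involutions, each conjugate to some $c_i^{\frac{\mu_i}{2}}$ with $\mu_i$ even. Your middle paragraph simply makes explicit the steps the paper leaves implicit --- in particular ruling out reflections, and hence reversible glide reflections, via the same torsion classification that the paper invokes without proof.
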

	\begin{proof}
		Since there is no finite order element in \gp other than the elements conjugate to $c_{i}$, this  theorem  follows from the previous lemma.
	\end{proof}

	\section{Seifert fibered space}  
	To find the reversible elements of the fundamental group of a Seifert fibered space, we first focus on the quotient group $\pi_1(M)\slash \langle h \rangle$ and discuss its reversibility. We first deal with those spaces which have orientable orbit surfaces.
\begin{cor}
		Any element conjugate to either $c_i^{\frac{\mu_i}{2}}$ or $c_i^{\frac{\mu_i}{2}}kc_j^{\frac{\mu_j}{2}}k^{-1}$ where $k\ \in \pi_1(M)\slash \langle h\rangle $ is reversible in $\pi_1(M)\slash \langle h \rangle$ iff $\mu_i$ and $\mu_j$ are even. No other elements in $\pi_1(M)\slash \langle h \rangle$ are reversible.
\end{cor}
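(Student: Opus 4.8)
The plan is to read off the corollary from \thmref{thm3} applied to the group $\pi_1(M)/\langle h\rangle$, which in the orientable case is the finitely generated Fuchsian group $\langle a_i,b_i,c_i,d_i\mid c_i^{\mu_i}=1,\ \prod[a_i,b_i]\prod c_i\prod d_i=1\rangle$. By \thmref{thm3} an element is reversible exactly when it is an involution or is conjugate to a hyperbolic element whose axis passes through an even-order vertex of the fundamental domain, so the entire problem becomes one of translating these two conditions into the explicit algebraic forms in the statement.

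First I would recall the standard fact that in such a Fuchsian group every torsion element is elliptic and conjugate into one of the maximal finite cyclic groups $\langle c_i\rangle$, which has order $\mu_i$. Hence an involution is precisely a conjugate of $c_i^{\mu_i/2}$, and for this to be a genuine order-two element one needs $\mu_i$ even. This already accounts for all reversible torsion elements and produces the elements conjugate to $c_i^{\mu_i/2}$ listed in the statement.

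The main step is the hyperbolic case. If $g$ is hyperbolic and reversible, then by \lemref{lem5} and \lemref{lem6} its reverser $b$ can be taken to be an involution, namely the $\pi$-rotation about an even-order vertex lying on the axis of $g$. A one-line computation using $bgb^{-1}=g^{-1}$ and $b^2=1$ gives $(bg)^2=1$, so $bg$ is again an involution and $g=b\,(bg)$ is a product of two involutions. Writing these two involutions, via the previous paragraph, as $b=wc_i^{\mu_i/2}w^{-1}$ and $bg=vc_j^{\mu_j/2}v^{-1}$ with $\mu_i,\mu_j$ even, and conjugating by $w^{-1}$ with $k=w^{-1}v$, shows that $g$ is conjugate to $c_i^{\mu_i/2}kc_j^{\mu_j/2}k^{-1}$. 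Geometrically the two involutions are the half-turns about the two even-order vertices through which the axis of $g$ passes, so this is consistent with \lemref{lem6}.

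For the converse I would check directly that the displayed elements are reversible: $c_i^{\mu_i/2}$ equals its own inverse, while for $g=st$ with $s=c_i^{\mu_i/2}$ and $t=kc_j^{\mu_j/2}k^{-1}$ both involutions one has $sgs^{-1}=ts=(st)^{-1}=g^{-1}$, so $s$ reverses $g$. The clause ``no other elements'' is then immediate from \thmref{thm3}, since parabolic elements and hyperbolic elements whose axes avoid every even-order vertex satisfy neither criterion. I expect the only real obstacle to be the bookkeeping in the hyperbolic step: verifying that every involution is indeed a conjugate of some $c_i^{\mu_i/2}$ and that the indices $i,j$ together with the conjugator $k$ can always be arranged so that $g$ lands in precisely the stated normal form.
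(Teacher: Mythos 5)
Your proposal is correct and follows essentially the same route as the paper: the paper's proof simply cites \cite[Theorem 1.1]{DG} for the characterisation ``reversible iff an involution or a product of two involutions'' and then identifies the involutions as the conjugates of $c_i^{\mu_i/2}$ with $\mu_i$ even, exactly the two ingredients you use. The only difference is that you re-derive the two-involution characterisation from the paper's own \thmref{thm3} and \lemref{lem5}--\lemref{lem6} via the decomposition $g=b\,(bg)$, making the argument self-contained where the paper defers to \cite{DG}.
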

	\begin{proof}
	Since $\pi_1(M)\slash \langle h\rangle $ is a Fuchsian group $\Gamma$, all reversible elements in $\Gamma$ are strongly reversible. By {\cite[Theorem 1.1]{DG}}, it is known that an element in a Fuchsian group is reversible if and only if it is an involution or a product of $2$ involutions. By the representation of the group, the involutions are the elements conjugate to any $c_i^{\frac{\mu_i}{2}}$ where $\mu_i$ is even. Hence, the claim follows.
	\end{proof}\\
	
	
	Now we can discuss the reversibility of the fundamental group of $ M $. 
	Since there is no finite order element in $ \pi_{1}(M) $,  we cannot talk about strongly reversible elements here. But there may be reversible elements, as stated earlier. Not that if an element is reversible, then so are its conjugates.
	
	\begin{lemma}
Elements conjugate to the power of the fiber generator $ h $ is a reversible element iff the classifying homomorphism on $\pi_1(G)$ is non-trivial.
	\end{lemma}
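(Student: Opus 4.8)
The plan is to read off both implications directly from the defining relations of $\pi_1(M)$, using the fact that the conjugation action of each generator on $\langle h\rangle$ is dictated entirely by $\phi$. Since reversibility is a conjugacy invariant (as already observed), it suffices to decide when a bare power $h^n$ with $n\neq 0$ is reversible, and then transport the conclusion to all conjugates.

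For the forward direction I would assume $\phi$ is non-trivial. Then there is a generator $z$ among the $a_i,b_i,d_i$ (orientable case) or $x_i,d_i$ (non-orientable case) with $\phi(z)=-1$, and the relation $zhz^{-1}=h^{\phi(z)}$ becomes $zhz^{-1}=h^{-1}$. Raising to the $n$-th power gives $zh^nz^{-1}=h^{-n}$, exhibiting $z$ as an explicit reverser for every power of $h$. By conjugacy invariance, every element conjugate to $h^n$ is then reversible as well. This is essentially the computation already flagged in the introduction, so the real content lies in the converse.

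For the converse I would argue by contrapositive: assume $\phi$ is trivial and show that no non-trivial power of $h$ can be reversible. When $\phi\equiv 1$, each relation $zhz^{-1}=h^{\phi(z)}$ collapses to $zhz^{-1}=h$, and together with the unconditional relations $c_ihc_i^{-1}=h$ this shows that $h$ commutes with every generator, hence $h$ is central in $\pi_1(M)$. Centrality forces every conjugate of $h^n$ to equal $h^n$ itself, so such an element is reversible exactly when $gh^ng^{-1}=h^{-n}$ for some $g$, which by centrality reduces to $h^n=h^{-n}$, i.e.\ $h^{2n}=1$.

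The main obstacle, and the only step that is not purely formal, is to rule out $h^{2n}=1$ for $n\neq 0$: one needs that the regular-fiber generator $h$ has infinite order in $\pi_1(M)$. This is exactly what the ambient setup supplies, since $\langle h\rangle$ is taken to be an infinite normal cyclic subgroup whose quotient $\pi_1(M)/\langle h\rangle$ is the Fuchsian (or non-orientable $2$-orbifold) group discussed above; hence $h^{2n}=1$ forces $n=0$, and no non-trivial power of $h$ is reversible when $\phi$ is trivial. Combining the two directions yields the stated equivalence.
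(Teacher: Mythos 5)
Your proposal is correct, and in fact it proves strictly more than the paper's own proof does. The forward direction coincides with the paper's argument: the paper's proof consists entirely of observing that a nontrivial $\phi$ yields a generator $x$ with $\phi(x)=-1$, whence $xhx^{-1}=h^{\phi(x)}=h^{-1}$, and stops there, leaving the ``only if'' direction of the stated equivalence unaddressed. You supply that missing converse cleanly: when $\phi\equiv 1$ all the conjugation relations collapse to $zhz^{-1}=h$, so $h$ is central, a conjugate of $h^n$ can only be $h^n$ itself, and reversibility would force $h^{2n}=1$, impossible since $h$ has infinite order in the paper's standing setup (where $\langle h\rangle$ is infinite cyclic with Fuchsian or non-orientable $2$-orbifold quotient — the same fact the paper invokes elsewhere, e.g.\ in the proof of Lemma 3.6). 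Your explicit remark that $zh^nz^{-1}=h^{-n}$ for all $n$, plus conjugacy invariance, is also a small but worthwhile tightening, since the statement concerns all powers of $h$ and their conjugates while the paper's computation is written only for $h$ itself. One caveat worth keeping in mind: the infinite-order hypothesis on $h$ is genuinely needed (it can fail for small Seifert fibered spaces with finite fundamental group), so your converse is valid exactly within the paper's ambient assumptions, which you correctly identified.
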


	\begin{proof}
Since $\phi$ is a homomorphism, it is determined by its actions on the generators of $\pi_1 (G)$. If $\phi$ is non-trivial, there is some generator $x$ of the form $a_i$, $b_i$, $x_i$ or $d_i$ such that $\phi(x)=-1$. Then by the presentation of $\pi_1 (M)$, $xhx^{-1}=h^{\phi(x)}=h^{-1}$, and so $h$ is reversible.
\end{proof}

\begin{cor}
	If $h$ is reversible in $\pi_1(M)$, the $c^{\mu_{i}}_i$ are reversible elements although each $ c_{i} $ is an infinite order element in $ \pi_{1}(M) $.
\end{cor}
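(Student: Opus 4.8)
The plan is to reduce the claim to the reversibility of powers of $h$ by invoking the defining relation $c_i^{\mu_i}=h^{\beta_i}$ in the presentation of $\pi_1(M)$. Since this relation identifies $c_i^{\mu_i}$ with a power of the fiber generator, it suffices to observe that reversibility passes to powers and then transport this fact along the relation.

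First I would use the preceding lemma: the hypothesis that $h$ is reversible forces the classifying homomorphism $\phi$ to be non-trivial, so there is a generator $x$ among the $a_i,b_i,x_i,d_i$ with $\phi(x)=-1$, and the presentation gives $xhx^{-1}=h^{-1}$ outright. Taking this $x$ as reverser, I would compute
\[
x\,c_i^{\mu_i}\,x^{-1}=x\,h^{\beta_i}\,x^{-1}=(xhx^{-1})^{\beta_i}=(h^{-1})^{\beta_i}=h^{-\beta_i}=(h^{\beta_i})^{-1}=(c_i^{\mu_i})^{-1},
\]
using the relation $c_i^{\mu_i}=h^{\beta_i}$ at the two ends and the fact that conjugation is a group automorphism in the middle. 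This exhibits $x$ as a reverser for $c_i^{\mu_i}$, so $c_i^{\mu_i}$ is reversible, and the same $x$ works simultaneously for every index $i$.

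I do not expect any genuine obstacle here, as the statement is an immediate formal consequence of the presentation together with the previous lemma. The only point worth recording is the contrast the corollary emphasises: in the Fuchsian quotient $\pi_1(M)\slash\langle h\rangle$ the element $c_i$ has finite order $\mu_i$, so that $c_i^{\mu_i}$ becomes trivial, whereas in $\pi_1(M)$ itself $c_i$ has infinite order precisely because $c_i^{\mu_i}=h^{\beta_i}$ is a non-trivial power of the infinite-order fiber generator $h$ (whenever $\beta_i\neq 0$). Thus $c_i^{\mu_i}$ is a genuine infinite-order reversible element rather than the identity, and the only care needed is to keep track of which $\beta_i$ are nonzero so that the reversed element is indeed non-trivial.
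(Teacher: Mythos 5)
Your proof is correct and is essentially the paper's argument made explicit: the paper simply notes that reversibility of $h$ gives reversibility of $h^{\beta_i}$ and then transports this along the relation $c_i^{\mu_i}=h^{\beta_i}$, which is exactly your computation. Your detour through the non-triviality of $\phi$ to produce the specific reverser $x$ is harmless but unnecessary -- any reverser $g$ of $h$ satisfies $g\,c_i^{\mu_i}\,g^{-1}=g\,h^{\beta_i}\,g^{-1}=h^{-\beta_i}=(c_i^{\mu_i})^{-1}$ directly.
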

\begin{proof}
	Let $h$ be reversible. Then so is $h^{\beta_i}$. The relation $c_i^{\mu_i}=h^{\beta_i}$ now implies that $c_i^{\mu_i}$ is also reversible.
\end{proof}

The two following lemmas investigate the reversibility of the other elements of $\pi_1(M)$.
 
\begin{lemma}\label{3.4}
	The powers of $c^{\mu_i}_{i}$ are reversible, but any other multiple of $c_{i}$ is not reversible for any $\pi_{1}(M)$.
\end{lemma}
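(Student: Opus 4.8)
The plan is to split the statement into its two halves and handle both by pushing everything down to the Fuchsian quotient $\Gamma = \pi_1(M)/\langle h\rangle$ through the natural projection $\pi$. The first assertion is essentially free: the relation $c_i^{\mu_i}=h^{\beta_i}$ gives $(c_i^{\mu_i})^n = c_i^{n\mu_i}=h^{n\beta_i}$, so every power of $c_i^{\mu_i}$ is a power of $h$. Since $h$ is reversible whenever $\phi$ is nontrivial (the preceding lemma) and a reverser of an element automatically reverses all its powers, the Corollary just proved immediately upgrades to: all powers of $c_i^{\mu_i}$ are reversible.

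The content lies in the converse, so first I would fix $k$ with $k\neq 0$ and $\mu_i\nmid k$ and argue by contradiction. Suppose $g c_i^{k} g^{-1}=c_i^{-k}$ for some $g\in\pi_1(M)$. Writing $\bar x=\pi(x)$ and applying $\pi$, this gives $\bar g\,\bar c_i^{\,k}\,\bar g^{-1}=\bar c_i^{\,-k}$ in $\Gamma$. Because $\mu_i\nmid k$, the element $\bar c_i^{\,k}$ is a nontrivial elliptic element of $\Gamma$ with a unique fixed point $p\in\mathbb{H}^{2}$, and its inverse $\bar c_i^{\,-k}$ fixes the same $p$. Hence $\bar g$ carries the fixed point of $\bar c_i^{\,k}$ to that of $\bar c_i^{\,-k}$, so $\bar g(p)=p$. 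As $\Gamma\subset\mathrm{PSL}(2,\R)$ is orientation preserving, any element fixing the interior point $p$ is elliptic and lies in the cyclic stabilizer of $p$, which in the given presentation is exactly $\langle\bar c_i\rangle\cong\mathbb{Z}/\mu_i$. Therefore $\bar g=\bar c_i^{\,j}$ for some $j$, and lifting through $\ker\pi=\langle h\rangle$ yields $g=c_i^{\,j}h^{s}$ for integers $j,s$.

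I would then substitute this form back into the original relation. Using $c_i h c_i^{-1}=h$ (so $c_i$ and $h$ commute) together with the fact that $c_i^{\,j}$ commutes with $c_i^{\,k}$, one computes $g c_i^{\,k} g^{-1}=c_i^{\,j}h^{s}c_i^{\,k}h^{-s}c_i^{-j}=c_i^{\,k}$. Comparing with the assumed $g c_i^{\,k}g^{-1}=c_i^{\,-k}$ forces $c_i^{\,2k}=1$. Since $\pi_1(M)$ is torsion-free, $c_i$ has infinite order, so $c_i^{\,2k}=1$ gives $k=0$, contradicting $k\neq 0$. Hence no reverser exists and $c_i^{\,k}$ is not reversible, which together with the first half yields the full dichotomy.

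The one genuinely delicate step is the identification of the fixed-point stabilizer with $\langle\bar c_i\rangle$; I would invoke the standard structure theory of Fuchsian groups, that the stabilizer of an interior point is finite cyclic and is generated here by the cone-point generator $c_i$. This is also where the hypothesis $\mu_i\nmid k$ is indispensable, since for $\mu_i\mid k$ the image $\bar c_i^{\,k}$ is trivial and the argument correctly collapses. I expect the main subtlety to be the case $k\equiv \tfrac{\mu_i}{2}\pmod{\mu_i}$, where $\bar c_i^{\,k}$ is an involution and is therefore \emph{already} reversible in $\Gamma$, so the quotient alone gives no contradiction; the point is that the above argument still applies verbatim, because the orientation-preserving reverser $\bar g$ is forced to commute with the $\pi$-rotation $\bar c_i^{\,k}$, and the genuine obstruction only surfaces after lifting, through torsion-freeness of $\pi_1(M)$.
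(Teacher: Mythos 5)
Your proof is correct, and while it shares the paper's overall strategy --- project the putative reverser to the Fuchsian quotient $\pi_1(M)/\langle h\rangle$, constrain its image, and lift back --- it is genuinely tighter at the decisive step. The paper argues coarsely: the reverser $\rho$ is not a power of $h$ (since $h$ centralizes $c_i$), so $[\rho]\neq 1$ in the quotient; it then invokes strong reversibility in Fuchsian groups from \cite{DG} to conclude $[\rho]$ has order $2$, hence $\rho^2=h^k$, and declares this to contradict the claim that every power of $\rho$ is ``transverse'' to $h$ --- a claim that is asserted rather than proved, and which is not true for general elements of $\pi_1(M)$ (e.g.\ $\rho$ conjugate to $c_i^{\mu_i/2}$ with $\mu_i$ even satisfies $\rho^2=h^{\beta_i}$ without being a power of $h$; indeed such elements drive the very next lemma). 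Moreover, strong reversibility only guarantees that \emph{some} reverser downstairs is an involution, not that $[\rho]$ itself is. Your route avoids both soft spots: the uniqueness of the fixed point of the nontrivial elliptic $\bar c_i^{\,k}$ forces $\bar g$ into the cone-point stabilizer, which you correctly identify with $\langle\bar c_i\rangle$, so that the lift $g=c_i^{\,j}h^{s}$ visibly centralizes $c_i^{\,k}$ and yields $c_i^{\,2k}=1$, contradicting torsion-freeness. Your explicit treatment of $k\equiv \mu_i/2 \pmod{\mu_i}$ --- where $\bar c_i^{\,k}$ \emph{is} reversible in the quotient, so no contradiction can appear downstairs --- is precisely the case the paper's order-$2$ argument glosses over, and your commuting-reverser analysis settles it cleanly.

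One small patch is needed for full generality: the lemma is asserted ``for any $\pi_{1}(M)$,'' including non-orientable orbit surfaces, whereas you assume $\Gamma\subset\mathrm{PSL}(2,\R)$. In the non-orientable case $\Gamma$ sits in $\mathrm{PGL}(2,\R)$, and you must additionally exclude an orientation-reversing $\bar g$ fixing the cone point; such an element would be a reflection (glide reflections are fixed-point free in $\mathbb{H}^{2}$), and the groups in question contain no reflections since all their torsion is conjugate into some $\langle \bar c_i\rangle$ --- so one extra sentence closes the argument. Your observation that the positive half of the statement is implicitly conditional on $h$ being reversible (equivalently, $\phi$ nontrivial) is also correct and matches the hypothesis of the preceding corollary, which the paper's own proof leaves tacit.
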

\begin{proof}
	On the contrary, let a multiple of $c_{i}$ be a reversible element in $\pi_1(M)$ with reverser $\rho $. We can consider that $\rho$ is not a power of $h$, since $h$ is contained in the centralizer of $c_{i}$. Then, the reverser $\rho$ is transverse to $h$, and this implies any power of $\rho $ will always be transverse to $h$. Hence $\rho$ has a non-trivial image in the group $\pi_{1}(M)/\langle h\rangle$, denoted by $[\rho]$. Since, from \cite{DG}, every reversible element in a Fuchsian group is strongly reversible, $[\rho]$ must be of order $2$. This implies, $\rho^{2}=h^{k}$ for some $k\in \z$, which contradicts our assumption. So, there is no such reverser that reverses any other power of $c_{i}$ except for the powers of the $c^{\mu_{i}}_{i}$.
\end{proof}

{\begin{lemma}\label{label}
If $M$ has an orientable orbit surface, the only reversible elements in $\pi_1(M)$ can be the conjugates of the powers of $h$ and the conjugates of the elements either of the form  $c_i^{\frac{\mu_i}{2}}kc_j^{\frac{\mu_j}{2}}k^{-1}$ with $\phi(k)=-1$ or of the form $c_i^{\frac{\mu_i}{2}}kc_j^{\frac{-\mu_j}{2}}k^{-1}$ with $\phi(k)=1$, where $\mu_i$ and $\mu_j$ are even, and $k$ is any element in $\pi_1(M)$, $\beta_{i}=\beta_{j}$.
\end{lemma}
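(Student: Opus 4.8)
The plan is to push the reversibility relation down to the Fuchsian quotient $\Gamma = \pi_1(M)/\langle h\rangle$ and then lift the resulting information back. Suppose $g$ is nontrivial and $\rho g\rho^{-1}=g^{-1}$. Projecting gives $[\rho][g][\rho]^{-1}=[g]^{-1}$ in $\Gamma$, so $[g]$ is either trivial or reversible. By the classification of reversible elements in the Fuchsian quotient (an involution, or a product of two involutions), there are exactly three cases for $[g]$: (i) $[g]=1$; (ii) $[g]$ is conjugate to a single involution $c_i^{\frac{\mu_i}{2}}$; (iii) $[g]$ is conjugate to a product $c_i^{\frac{\mu_i}{2}}[k]c_j^{\frac{\mu_j}{2}}[k]^{-1}$ of two involutions. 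Since conjugating $g$ in $\pi_1(M)$ only conjugates $[g]$ in $\Gamma$, I may assume $[g]$ is already in the relevant normal form and it suffices to treat these three cases.

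Case (i) is immediate: $g\in\langle h\rangle$, so $g$ is a power of $h$, which is the first family. For case (ii) I would write $g=c_i^{\frac{\mu_i}{2}}h^m$ and argue as a twisted strengthening of \lemref{3.4}. Here $[\rho]$ must centralise the involution $[g]=c_i^{\frac{\mu_i}{2}}$, and in a Fuchsian group the centraliser of an elliptic element is the cyclic stabiliser $\langle c_i\rangle$ of its fixed point; hence $\rho\in\langle c_i,h\rangle$. As $c_i$ and $h$ commute, $\rho$ then commutes with $g$, forcing $g=g^{-1}$, i.e. $g^2=1$. Since $\pi_1(M)$ is torsion free this gives $g=1$, contradicting $[g]\neq 1$. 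Thus case (ii) contributes no reversible elements.

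Case (iii) is the heart of the matter. I would lift the normal form to $g=c_i^{\frac{\mu_i}{2}}\,\tau\,h^n$, where $\tau=kc_j^{\frac{\mu_j}{2}}k^{-1}$ and $n\in\z$ records the residual ambiguity of the lift. A short computation with the relations $c_jhc_j^{-1}=h$ and $khk^{-1}=h^{\phi(k)}$ shows that $\tau$ commutes with $h$, that $\tau^2=kc_j^{\mu_j}k^{-1}=h^{\beta_j\phi(k)}$, and that $\bigl(c_i^{\frac{\mu_i}{2}}\bigr)^2=h^{\beta_i}$. Taking $c_i^{\frac{\mu_i}{2}}$ as the lift of the reverser and imposing $c_i^{\frac{\mu_i}{2}}g\,c_i^{-\frac{\mu_i}{2}}=g^{-1}$ reduces, after cancelling the common factor $c_i^{-\frac{\mu_i}{2}}$ and collecting the central powers of $h$, to the single relation $\tau^2=h^{-(\beta_i+2n)}$. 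Comparing this with $\tau^2=h^{\beta_j\phi(k)}$ yields $\beta_i+\beta_j\phi(k)+2n=0$. Finally, replacing $c_j^{\frac{\mu_j}{2}}$ by $c_j^{-\frac{\mu_j}{2}}$ converts $\tau^2$ from $h^{\beta_j\phi(k)}$ to $h^{-\beta_j\phi(k)}$ at the cost of a central $h^{\mp\beta_j}$ (using $c_j^{-\mu_j}=h^{-\beta_j}$), so choosing the exponent sign to match $\phi(k)$ — namely $+\frac{\mu_j}{2}$ when $\phi(k)=-1$ and $-\frac{\mu_j}{2}$ when $\phi(k)=1$ — makes $\tau^2=h^{-\beta_j}$ and collapses the relation to $\beta_i=\beta_j$ with the central factor absorbed, producing exactly the two stated forms.

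The step I expect to be the main obstacle is this last one: the bookkeeping of the central powers of $h$. Because $c_i^{\frac{\mu_i}{2}}$ squares to $h^{\beta_i}$ rather than to the identity, the lift of a Fuchsian reverser need not reverse $g$ exactly, and the discrepancy is precisely a power of $h$; the delicate point is to show that this discrepancy can always be absorbed by the correct choice of sign for the $c_j$-exponent and that doing so forces $\beta_i=\beta_j$. A secondary point to verify is that allowing a \emph{general} reverser $\rho$ — whose image in $\Gamma$ differs from $c_i^{\frac{\mu_i}{2}}$ by an element of the cyclic centraliser of $[g]$ — does not enlarge the list, so that the three families are genuinely exhaustive.
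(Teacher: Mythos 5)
Your overall strategy is exactly the paper's: project to the Fuchsian quotient $\Gamma=\pi_1(M)/\langle h\rangle$, invoke the classification of reversible elements there (trivial image, an involution, or a product of two involutions), kill the involution case using the infinite order of $h$ and torsion-freeness, and extract the conditions on $\beta_i,\beta_j,\phi(k)$ by imposing the reversal relation with the lifted reverser $c_i^{\frac{\mu_i}{2}}$. Your cases (i) and (ii) are sound --- indeed your centraliser argument in (ii) is tighter than the paper's remark that ``the reverser of $x$ must be $x$ itself'' --- and your computation in (iii), giving $\tau^2=h^{\beta_j\phi(k)}$ on one hand and $\tau^2=h^{-(\beta_i+2n)}$ from the reversal relation on the other, reproduces the paper's computation (the paper works with the lift $n=0$ and obtains $h^{\beta_i}kh^{\beta_j}k^{-1}=h^{\beta_i+\beta_j\phi(k)}=\mathrm{Id}$, forcing $\beta_i=\beta_j$ and $\phi(k)=-1$, with the analogous statement for $c_j^{-\frac{\mu_j}{2}}$).

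The genuine gap is the final step of case (iii). After your sign choice making $\tau^2=h^{-\beta_j}$, the relation $\tau^2=h^{-(\beta_i+2n)}$ reads $\beta_j=\beta_i+2n$, which collapses to $\beta_i=\beta_j$ only for the lift $n=0$, and nothing in your argument forces $n=0$. The phrase ``with the central factor absorbed'' is doing all the work and fails as stated: $h^n$ cannot be conjugated away (note $\phi(g)=1$, so $g$ commutes with $h$), and pushing $h^{n}$ inside the conjugation gives $k\bigl(c_j^{\frac{\mu_j}{2}}h^{\mp n}\bigr)k^{-1}$, which is no longer of the stated form $kc_j^{\pm\frac{\mu_j}{2}}k^{-1}$. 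In fact your own formula exhibits candidate reversible lifts $c_i^{\frac{\mu_i}{2}}kc_j^{\frac{\mu_j}{2}}k^{-1}h^{(\beta_j-\beta_i)/2}$ whenever $\beta_i\equiv\beta_j\pmod 2$, so to reach the lemma you must either show such elements are conjugate into the two stated families or rule them out; neither is done. Compounding this, your ``secondary point'' --- that a general reverser, whose image in $\Gamma$ is $\psi^{l}c_i^{\frac{\mu_i}{2}}$ for the primitive hyperbolic $\psi$ with $[g]\in\langle\psi\rangle$, and whose lift carries its own power of $h$, produces no further elements --- is precisely what the ``only'' direction of the lemma requires, and you flag it without proving it (the paper dispatches it with the one-line assertion that all reversers have the form $x^{l}c_i^{\frac{\mu_i}{2}}$). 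As written, your proposal establishes that the two displayed forms with $\beta_i=\beta_j$ are reversible, but not that they, together with the powers of $h$, exhaust the reversible elements.
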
}

	\begin{proof}
	Let $x$ be an element in $\pi_{1}(M)$, other than any power of $h$.
	 Let $x$ be reversible. Then $x$ is freely homotopic to $x^{-1}$. Consider the projection of this free homotopy to the Fuchsian group $\pi_1(M)/\langle h \rangle$. This would be a free homotopy of a loop in this Fuchsian complex to its inverse. But this would imply that this is a reversible element in $\pi_1(G)/\langle h \rangle $. But as we have shown earlier, the reversible elements in this group are of the form of conjugates of $c_i^{\frac{\mu_i}{2}}kc_j^{\frac{\mu_j}{2}}k^{-1}$ or $c_i^{\frac{\mu_i}{2}}$ for even $\mu_i$ and $\mu_j$. But, for elements that are conjugates of $c_i^{\frac{\mu_i}{2}}$, i.e., $\alpha c^{\frac{\mu_{i}}{2}}\alpha^{-1}$, the reverser of $x$ must be $x$ itself. Then the element in $x\in \pi_{1}(M)$ will satisfy,\[ \alpha c^{\frac{\mu_{i}}{2}}\alpha^{-1}\alpha c^{\frac{\mu_{i}}{2}}\alpha^{-1}\alpha c^{\frac{-\mu_{i}}{2}}\alpha^{-1}=\alpha c^{\frac{-\mu_{i}}{2}}\alpha^{-1} \]
	 \[\imp h^{\beta_{i}}=1.\]
	 Since $h$ is an infinite order element, this is a contradiction. Hence, the lifts of the elements conjugate to $c^{\frac{\mu_{i}}{2}}$ are not reversible in $\pi_{1}(M)$. So, the only other candidates from $\pi_{1}(M)/\langle h\rangle$ that may be reversible in $\pi_{1}(M).$ are the products of the involutions.\\
	 Let the element $x=c_i^{\frac{\mu_i}{2}}kc_j^{\frac{\mu_j}{2}}k^{-1}$ be reversible in $\pi_{1}(M)/\langle h\rangle$ with reverser $c^{\frac{\mu_{i}}{2}}.$ Then in $\pi_{1}(M),$
	 \[ c_{i}^{\frac{\mu_{i}}{2}} c_i^{\frac{\mu_i}{2}}kc_j^{\frac{\mu_j}{2}}k^{-1}c_{i}^{-\frac{\mu_{i}}{2}} c_i^{\frac{\mu_i}{2}}kc_j^{\frac{\mu_j}{2}}k^{-1}= c_{i}^{\mu_{i}} kc_j^{\frac{\mu_j}{2}}k^{-1}kc_j^{\frac{\mu_j}{2}}k^{-1}= h^{\beta_{i}}kh^{\beta_{j}}k^{-1}. \]
	 Then, $h^{\beta_{i}}kh^{\beta_{j}}k^{-1}$ will be identity only if $\beta_{i}=\beta_{j}$ and $ \phi(k)=-1.$ 
	 It will be same if we choose any other reverser for that particular reversible element since all the reversers are in the form of $x^{l}c^{\frac{\mu_{i}}{2}}.$ In the similar way, we can show that $c_i^{\frac{\mu_i}{2}}kc_j^{-\frac{\mu_j}{2}}k^{-1}$  is reversible if $\phi(k)=1$.
	
	If $x$ is not reversible in $\pi_1(M)/\langle h \rangle$, a reverser of $x$ in $\pi_1(M)$, say $\rho$, must be a power of $ h$, since the projection to $\pi_1(M)/\langle h \rangle$ must be constant. Then in $\pi_{1}(M)/\langle h\rangle  $, $[x]=[x^{-1}]$. This implies that $[x^{2}]=1$. Then, $x^{2}$ is a power of $h$, and $x^{2}$ is also reversible with same reverser $\rho$ that $x$ is. Then, \[ h^kx^{2}h^{-k}=x^{-2}\]
		\[\imp x^{2}=x^{-2}\]
		\[\imp x^{4}=Id.\] This is contradiction since $\pi_{1}(M)$ is torsion free. This proves our lemma.
	\end{proof}
\subsection{Proof of \thmref{l1}}
The previous lemma shows that there are no reversible elements in $\pi_1(M)$ other than those conjugate to $h$. As shown earlier, the reversibility of $h$ and its conjugates depends on the classifying homomorphism. This is summarised in the statement of Theorem \ref{l1}.
\qed

In the case of non-orientable surfaces, we have certain exceptions to this situation, as shown in the previous section.

The reversible elements of $\pi_1(M)$ in case of a non-orientable orbit surface can be summarised as follows, by the same proofs as above.

\begin{cor}
If $M$ has an non-orientable orbit surface other than the Klein bottle or $\Rp$, the reversible elements of $\pi_1(M)$ are either conjugate to any power of $h$ or either of the form  $c_i^{\frac{\mu_i}{2}}kc_j^{\frac{\mu_j}{2}}k^{-1}$ with $\phi(k)=-1$ or of the form $c_i^{\frac{\mu_i}{2}}kc_j^{\frac{-\mu_j}{2}}k^{-1}$ with $\phi(k)=1$, where $\mu_i$ and $\mu_j$ are even, and $k$ is any element in $\pi_1(M)$, $\beta_{i}=\beta_{j}$.
\end{cor}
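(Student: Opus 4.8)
The plan is to run the argument of \thmref{l1} essentially verbatim, with the Fuchsian quotient replaced by the non-orientable $2$-orbifold group that arises here. Write $\Gamma'=\pi_1(M)/\langle h\rangle$; for a non-orientable orbit surface the presentation of $\pi_1(M)$ gives precisely $\Gamma'=\langle x_i,c_i,d_i\mid c_i^{\mu_i}=1,\ \prod x_i^2\prod c_i\prod d_i=1\rangle$, which is exactly the group \gp studied in the previous section. Since we have excluded the Klein bottle and $\Rp$, \lemref{main} applies together with the classification theorem following it: every infinite-order reversible element of \gp is strongly reversible with an involution as reverser, and the reversible elements of \gp are precisely the conjugates of the involutions $c_i^{\mu_i/2}$ (for even $\mu_i$) together with the conjugates of the products of two such involutions $c_i^{\mu_i/2}kc_j^{\mu_j/2}k^{-1}$. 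This is the identical list that drove the orientable argument, so only the lifting analysis of \lemref{label} needs re-checking under the change of quotient; the reversibility of $h$ and its conjugates is governed by the classifying homomorphism exactly as before, and the analogue of \lemref{3.4} is subsumed by the projection argument below.

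First I would dispose of the non-$h$ elements by projection. Let $x\in\pi_1(M)$ be reversible and not a power of $h$, and project the free homotopy carrying $x$ to $x^{-1}$ down to \gp. The image $[x]$ is either reversible in \gp\ or trivial. If $[x]$ is trivial, the reverser of $x$ lies in $\langle h\rangle$; then $[x]=[x^{-1}]$ forces $[x^2]=1$, so $x^2\in\langle h\rangle$, and conjugating the central $x^2$ by the reverser gives $x^2=x^{-2}$, i.e.\ $x^4=Id$, impossible since $\pi_1(M)$ is torsion free. If $[x]$ is a conjugate of a single involution $c_i^{\mu_i/2}$, the reverser must be $x$ itself, and lifting the relation $c_i^{\mu_i}=h^{\beta_i}$ into the reversibility equation yields $h^{\beta_i}=1$, again impossible. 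Hence the only surviving possibility is that $[x]$ is a product of two involutions.

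In that case take $[x]=c_i^{\mu_i/2}kc_j^{\mu_j/2}k^{-1}$ with reverser the involution $c_i^{\mu_i/2}$; lifting and using $c_i^{\mu_i}=h^{\beta_i}$, $c_j^{\mu_j}=h^{\beta_j}$ and the defining relation $khk^{-1}=h^{\phi(k)}$, the reversibility equation $\rho x\rho^{-1}x=Id$ reduces to
\[ c_i^{\frac{\mu_i}{2}}\bigl(c_i^{\frac{\mu_i}{2}}kc_j^{\frac{\mu_j}{2}}k^{-1}\bigr)c_i^{-\frac{\mu_i}{2}}\bigl(c_i^{\frac{\mu_i}{2}}kc_j^{\frac{\mu_j}{2}}k^{-1}\bigr)=c_i^{\mu_i}kc_j^{\mu_j}k^{-1}=h^{\beta_i}kh^{\beta_j}k^{-1}=h^{\beta_i+\phi(k)\beta_j}, \]
which is trivial exactly when $\beta_i=\beta_j$ and $\phi(k)=-1$. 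Replacing $c_j^{\mu_j/2}$ by $c_j^{-\mu_j/2}$ — the same class in \gp\ but a different lift in $\pi_1(M)$ — produces the companion form with $\phi(k)=1$, and as in the orientable case any other reverser has the shape $x^{\ell}c_i^{\mu_i/2}$ and changes nothing. The main obstacle I anticipate is structural rather than computational: one must be certain that the quotient is genuinely governed by the $2$-orbifold classification, and the earlier lemma shows precisely that the Klein bottle and $\Rp$ are honest exceptions whose orbifold groups carry extra reversible elements. The hypothesis excluding them is exactly what licenses substituting \lemref{main} and its following theorem for the Fuchsian input of \thmref{l1}; confirming that no further non-orientable degeneracy (for instance, low-order cone points interacting with the crosscap generators $x_i$) introduces reversible elements beyond those already classified in \gp\ is the step that most deserves care.
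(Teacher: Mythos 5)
Your proposal is correct and takes essentially the same route as the paper, which proves this corollary simply by declaring that the projection-and-lifting argument of \lemref{label} goes through verbatim once the Fuchsian quotient is replaced by the non-orientable orbifold group \gp of Section 2, with the exclusion of the Klein bottle and $\Rp$ being exactly what makes \lemref{main} and its ensuing classification theorem available. Your explicit case analysis (reverser projecting into $\langle h\rangle$ forcing $x^4=Id$, the single-involution case forcing $h^{\beta_i}=1$, and the computation $h^{\beta_i}kh^{\beta_j}k^{-1}=h^{\beta_i+\phi(k)\beta_j}$ yielding the conditions $\beta_i=\beta_j$ and $\phi(k)=\mp 1$ for the two forms) is precisely the paper's argument spelled out.
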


When the orbit surface is the Klein bottle or $\Rp$, we get additional reversible elements which are the generators of the orbit surface. Thus, in the case of a given orbit surface, the reversibility of the elements of the fundamental group of a Seifert fibered space over that orbit surface is solely determined by $\phi$. 

\begin{rk}
	The classification of the reversible elements in a given Seifert-fibered space depends only on the classifying homomorphism.
\end{rk}

\section{Application to universal extension of Fuchsian groups}
In the previous section, we have classified the reversible elements of the fundamental group of a Seifert-fibered  space. We can see the fundamental group in the form of an exact sequence:
\begin{equation}
		1\longrightarrow \z \longrightarrow \pi_{1}(M)\longrightarrow F\longrightarrow 1
\end{equation}

Here, $\pi_{1}(M)$ is the fundamental group of $M$, and $F$ is a Fuchsian group or  discrete isometry group of $\rm PGL(2,\R)$. So, $\pi_{1}(M)$ is then a universal extension of $F$, which can now be seen as $\pi_1(M)\slash \langle h \rangle$. This gives us a new view-point to consider certain groups as fundamental groups of a Seifert-fibered space. Let us consider $F=\rm PSL(2,\z)$ which is presented by 
	$\langle a,b| a^{2},b^{3}\rangle.$ We have already mentioned the presentation of $\pi_{1}(M)$ for a given $M$ in Equation \eqref{fo} and Equation \eqref{fn}.
	
 We can choose an orientable connected $3$-manifold $M$ in such a way that the $\pi_{1}(M)\cong B_{3}$. $B_{3}$ has a presentation as $\langle \s \si | \s \si \s=\si\s\si\rangle $. The normal subgroup of $B_{3}$ is $\z$ and it is generated by $(\s\si)^{3}$. Thus, $B_{3}/\langle \s\si \rangle=\langle \s\si\s, \s\si |(\s\si)^{3},(\s\si\s)^{2} \rangle=\rm PSL(2,\z)\cong\Delta(2,3,\infty)$. Choosing the $c_{i}$ and $\beta_i$ suitably, we can consider the Seifert-fibered manifold $M$ determined by $ (O,o,0|1, (2,1),(3,1))$. The presentation of the fundamental group of $M$ is then
 \begin{equation}\label{eq}
 		\langle c_{1},c_{2},d,h| c_ihc_i^{-1}=h,dhd^{-1}=h^{\phi(d)} ,	c_i^{\mu_i}=h^\beta_{i}, c_1 c_{2} d h=1 \rangle.
 \end{equation}

 Now, consider the generating elements $c_{1}=\s\si\s,c_{2}=\s\si,$ which implies that $h=(\s\si)^{3}$ and $d^{-1}=\s\si\s^{2}\si.$\\ This determines the Fuchsian complex $\pi_1(M)\slash \langle h \rangle$, and we obtain the resultant orbit surface $G$ as the open disk $D$ by further quotienting it by the $c_i$. Then, $\phi : \pi_{1}(D)\longrightarrow \z_{2}$ is the trivial homomorpshim. Then,  the group \ref{eq} can be presented as
 \begin{align*}
 	\langle \s\si\s,\s\si,\s\si\s^{2}\si,(\s\si)^{3}|(\s\si)^{3}\s\si\s=\s\si\s(\s\si)^{3}
 	,(\s\si\s)^{2}=(\s\si)^{3}
 	 \rangle
 \end{align*}
 \begin{align*}
 	=\langle \s,\si| \s\si\s=\si\s\si\rangle=B_{3}.
 \end{align*}
 So, we can deduce the exact sequence 
 \begin{align}
 	1\longrightarrow \z \longrightarrow B_{3}\longrightarrow \rm PSL(2,\z)\longrightarrow 1
 \end{align}
\begin{cor}\label{4.1}
	The only reversible elements of $B_{3}$ are conjugates to $[\s\si\s,k]$ for any element $k$ in $B_{3}$. 
\end{cor}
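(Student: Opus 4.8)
The plan is to apply \thmref{l1} directly, using the identification of $B_3$ with $\pi_1(M)$ already set up above, where $M$ is the Seifert manifold $(O,o,0\mid 1,(2,1),(3,1))$ with fiber generator $h=(\s\si)^3$, cone generators $c_1=\s\si\s$, $c_2=\s\si$, and trivial classifying homomorphism $\phi$ (the orbit surface being the open disk). So everything reduces to reading off, for these specific Seifert invariants, which elements in the trivial-$\phi$ clause of \thmref{l1} genuinely occur.

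First I would record the cone orders $\mu_1=2$ and $\mu_2=3$. Because $\phi$ is trivial, \thmref{l1} says the reversible elements of $\pi_1(M)\cong B_3$ are precisely the conjugates of elements of the form $c_i^{\mu_i/2}\,k\,c_j^{-\mu_j/2}\,k^{-1}$ with $k\in B_3$, and such an expression only makes sense with integer exponents when both $\mu_i$ and $\mu_j$ are even. The decisive step is then the parity check: among $\mu_1=2$ and $\mu_2=3$ only $\mu_1$ is even, so we are forced into $i=j=1$. Substituting $\mu_1/2=1$ and $c_1=\s\si\s$ collapses the family to $c_1\,k\,c_1^{-1}\,k^{-1}=[\s\si\s,k]$, which is exactly the claimed form; that these are genuinely reversible follows from the explicit reverser $c_1$ exhibited in the proof of \lemref{label}.

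To finish and justify the word ``only,'' I would dispose of the remaining candidate from \thmref{l1}, the conjugates of powers of $h$: since $\phi$ is trivial here, the earlier lemma stating that $h$ is reversible if and only if $\phi$ is non-trivial shows that no power of $h$ is reversible, so no further conjugacy classes arise. The only real obstacle is the bookkeeping over which generators contribute---one must check that the boundary generator $d$ and the odd-order cone generator $c_2$ cannot appear---but the parity requirement ``$\mu_i$ even'' settles this cleanly, leaving $[\s\si\s,k]$ as the sole family.
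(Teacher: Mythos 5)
Your proposal is correct and follows essentially the same route as the paper: the paper's proof likewise invokes the triviality of $\phi$ and applies \lemref{label} (via \thmref{l1}), leaving the specialisation to $B_3$ implicit. You merely make explicit the parity bookkeeping ($\mu_1=2$, $\mu_2=3$ forcing $i=j=1$, hence the form $[\s\si\s,k]$) and the exclusion of powers of $h$, which is a faithful filling-in of the paper's one-line argument.
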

\begin{proof}
	From the above discussion, $\phi$ is trivial, and  applying Theorem \ref{label} we know that reversible elements  for the orientable case depend only on $\phi$. The claim thus follows from this theorem.
\end{proof}

Note that we cannot use the same technique to find the reversible elements of $B_n$ since $B_{n}$ is not a Seifert-fibered group for any $n>3$. This follows from the theorem below.

\begin{theorem}\label{4.2}
	For any $n>3$ the braid group $B_{n}$ is not isomorphic to any Seifert-fibered group. 
\end{theorem}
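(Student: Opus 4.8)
The plan is to show that $B_n$ for $n>3$ fails one of the structural properties that every Seifert-fibered group must satisfy. The cleanest strategy exploits the center. Every Seifert-fibered group $\pi_1(M)$ sits in the exact sequence $1\to\langle h\rangle\to\pi_1(M)\to F\to 1$, where $\langle h\rangle\cong\z$ is central (at least when $\phi$ is trivial) and $F$ is a Fuchsian or non-orientable $2$-orbifold group, so the quotient $\pi_1(M)/Z(\pi_1(M))$ is virtually a surface group and in particular is a group acting properly discontinuously on $\h^2$, i.e.\ a group of cohomological dimension at most $2$. For the braid groups we use the classical fact that $Z(B_n)=\langle(\s\si\cdots\sigma_{n-1})^{n}\rangle\cong\z$ and that $B_n/Z(B_n)$ is isomorphic to the mapping class group of the $n$-punctured disk (equivalently an index-related quotient tied to $\mathrm{MCG}$ of the sphere). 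So first I would compute $B_n/Z(B_n)$ and compare it against the list of possible quotients $F$ that arise for Seifert-fibered groups.

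The key steps, in order, are as follows. First, recall that if $\pi_1(M)$ is a Seifert-fibered group with infinite $\langle h\rangle$, then $\langle h\rangle$ is (up to finite index) central and the quotient $F=\pi_1(M)/\langle h\rangle$ is a two-dimensional (orbifold) group; such $F$ has cohomological dimension $\le 2$, and consequently $\pi_1(M)$ itself has cohomological dimension $\le 3$. Second, recall the known cohomological dimension of the braid groups: $\mathrm{cd}(B_n)=n-1$. Hence for $n\ge 5$ we immediately get $\mathrm{cd}(B_n)=n-1\ge 4>3$, which already rules out $B_n$ being Seifert-fibered for $n\ge 5$. Third, the only remaining case is $n=4$, where $\mathrm{cd}(B_4)=3$ does not give an immediate contradiction, so this case must be handled separately. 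For $B_4$ I would argue via the quotient: $B_4/Z(B_4)$ is known to contain a free subgroup of rank two as a finite-index normal subgroup (indeed $B_4/Z(B_4)$ maps onto $B_3/Z(B_3)\cong\mathrm{PSL}(2,\z)$ with free kernel), so $B_4/Z(B_4)$ is not a virtually surface or triangle group of the type required for $F$; alternatively one checks that $B_4/Z(B_4)$ has a finite-index subgroup isomorphic to $F_2\times\z$ or contains $\z^2$ in an incompatible way, contradicting that $F$ acts properly discontinuously on $\h^2$.

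I expect the main obstacle to be the case $n=4$. The cohomological dimension argument dispatches $n\ge 5$ cleanly, but $B_4$ has the same cohomological dimension as a Seifert-fibered group, so the contradiction must come from the finer structure of the quotient $B_4/Z(B_4)$ rather than from a dimension count. The crux is to pin down $B_4/Z(B_4)$ precisely enough to show it cannot be the base orbifold group $F$ of any Seifert fibration: the natural route is to use the surjection $B_4\to B_3$ (sending $\sigma_1,\sigma_3\mapsto\s$ and $\sigma_2\mapsto\si$) together with the description of its kernel as a free group, and to show that the resulting extension structure is incompatible with the centrally-extended-Fuchsian form forced by Equations \eqref{fo} and \eqref{fn}. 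Concretely, I would show that $B_4/Z(B_4)$ is virtually a nonabelian free-by-cyclic group whose center is trivial but which is not isomorphic to any finitely generated Fuchsian or non-orientable orbifold group, for instance by comparing the abelianizations or by noting that Fuchsian groups are either virtually free or virtually surface groups while $B_4/Z(B_4)$ is neither.
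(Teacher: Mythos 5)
Your plan takes a genuinely different route from the paper, and its overall architecture is workable, but it needs two repairs before it is a proof. The paper argues uniformly in $n$: it first invokes \cite[Corollary 10.4.6]{Ma} --- since $B_n$ is not virtually abelian and $Z(B_n)\neq 1$, any Seifert fibration forces $\langle h\rangle=Z(B_n)$ --- and then, working with the two-generator presentation \eqref{4.4}, shows that the commuting relations $x^{i}yx^{-i}y=yx^{i}yx^{-i}$ are incompatible with $B_n/Z(B_n)$ acting as a discrete isometry group of $\h^{2}$: commuting isometries share fixed-point sets, which forces either $x$ central (impossible) or $x^{2}$ an involution, and the residual case $n=4$ is killed because $\sigma_1\sigma_3$ would become central. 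Your split --- cohomological dimension for $n\ge 5$, then a bespoke analysis of $B_4/Z(B_4)$ --- buys a very fast disposal of $n\ge5$: since $\mathrm{cd}(B_n)=n-1$ and torsion-free $3$-manifold groups have cohomological dimension at most $3$, you in fact show $B_n$ is not a $3$-manifold group at all for $n\ge 5$, something the paper only records afterwards as a remark via \cite{HMT} and \cite{Go}. The cost is that $n=4$ becomes an isolated case carrying all the real content, and that is where your sketch is weakest.

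The two gaps. First, for $n=4$ you silently identify the fiber subgroup with the center: to write $F=B_4/Z(B_4)$ you must know $\langle h\rangle=Z(B_4)$, and your parenthetical ``at least when $\phi$ is trivial'' leaves the nontrivial-$\phi$ case open (if $\phi$ is nontrivial, $h$ is not central at all). This is exactly what the paper's appeal to \cite[Corollary 10.4.6]{Ma} supplies: for a Seifert-fibered group that is not virtually abelian the center is either $\langle h\rangle$ or trivial, and $Z(B_4)\cong\z$ is not trivial. You need this citation (or an equivalent argument) too. Second, your structural claims about $B_4/Z(B_4)$ are partly wrong: it is an extension of $\mathrm{PSL}(2,\z)$ by a virtually free kernel, not ``virtually free-by-cyclic'' in any evident sense, and the suggestion that it has a finite-index subgroup isomorphic to $F_2\times\z$ is false (in a virtually free group the centralizer of an infinite-order element is virtually cyclic, which rules out such a subgroup sitting compatibly over $\mathrm{PSL}(2,\z)$). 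The alternative you mention in passing is the one that works, and you should promote it to the main argument: $\sigma_1$ and $\sigma_3$ commute, generate $\z^{2}$ (check on the abelianization of the pure braid group, using the symmetric-group image to reduce to even powers), and meet $Z(B_4)=\langle \Delta^2\rangle$ trivially (compare $A_{12},A_{34}$ coordinates against the full twist), so $\z^{2}$ embeds in $B_4/Z(B_4)$; no discrete subgroup of $\mathrm{PGL}(2,\R)$ contains $\z^{2}$. Note that this argument applies verbatim for every $n\ge4$, so once it is written out it subsumes your cohomological-dimension step entirely --- and it is, at bottom, the same phenomenon the paper exploits, since its analysis of the relations $x^{i}yx^{-i}y=yx^{i}yx^{-i}$ is precisely the statement that commuting pairs in Fuchsian and NEC groups are elementary.
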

\begin{proof}
	On the contrary let us assume that braid groups are Seifert-fibered groups. So, let there be a Seifert-fibered manifold $M$ such that $B_{n}\cong \pi_{1}(M)$, with maximal normal subgroup $\langle h\rangle$ where $h$ represents an ordinary fiber. It is well known that the braid group $B_{n}$ where $n>2$ is not virtually abelian. Then, from \cite[Corollary 10.4.6]{Ma}, the center of $\pi_{1}(M)$ is either $\langle h\rangle$ or trivial. Since the center of $B_{n}$ is not trivial, therefore $Z(B_{n})$ is $\langle h\rangle$. Let us consider the braid group $B_n$
	\[B_{n}=\langle \s\si\dots \sigma_{n-1}|\sigma_{i}\sigma_{{i+1}}\sigma_{i}=\sigma_{{i+1}}\sigma_{i}\sigma_{{i+1}}, \sigma_{i}\sigma_{j}=\sigma_{j}\sigma_{i} \hbox{  for  } |i-j|>2\rangle.\] 
	In another presentation we can rewrite the group as mentioned in \cite[Chapter 2, Section 2, Exercise 2.4]{MK}:
	\begin{equation}\label{4.4}
		B_{n}=\langle x,y|x^{n}=(xy)^{n-1}, x^{i}yx^{-i}y=yx^{i}yx^{-i} \hbox{ where } i=2,3,\dots, m\rangle
	\end{equation}
	where, $m=\lfloor\frac{n}{2}\rfloor.$ 
	Then, the fiber $h=(\s\si\dots\sigma_{n-1})^{n}=x^{n}$ is the generator of $Z(\pi_{1}(M))$. 
	 Then, \[B_{n}/\langle h\rangle=\langle  x,y|x^{n}=(xy)^{n-1}=1, x^{i}yx^{-i}y=yx^{i}yx^{-i} \hbox{ where } i=2,3,\dots, m\rangle \] is a Fuchsian group or a fundamental group of non-orientable orbifold surface. So, it acts as an isometry group in $\h^{2}$. If $\bn$ acts in $\h^{2}$, then $x^{i} yx^{-i}$ and $y$ are powers of the same element $\psi\in \bn$, that is, $x^{i}yx^{-i}=\psi^{u}$ and $ y=\psi^{v}$ for some integers $u,  v$. But in \cite{DG}, we noticed that $x^{i}$ sends  fixed points of $y$ to the fixed points of $x^{i}yx^{-i}$. That means either $x^{i}$ fixes the fixed points of $y$, or interchanges them for every $i$. {If it fixes the fixed points then that implies $xy=yx.$ Then $x$ is an element in the center of $\bn $, so it must be trivial. This is contradiction in our assumption.}
	 
	 {So, $x^{i}$ must interchange the fixed points of $y$ for every $i$.} Then $x^{2}$ must be an involution (orientation preserving or orientation reversing). This implies $n=4$, and thus $x^{2}yx^{-2}=y^{-1}.$ {$y^{-1}$ is $\sigma_{3}$ in $B_{4},$ and then $\s \sigma_{3}$ is an element in the center which contradicts our assumption.}
	  So, $\bn$ will not be the fundamental group of a Seifert-fibered space for any $n>3$.
\end{proof}\\
However, there always exits some reversible elements in $B_{n}$ by the following remark. Here, we are assuming the presentation of $B_{n}$ as given in \eqref{4.4}.
\begin{rk}
	From \corref{4.1}, there are some reversible elements in $B_n$ which are conjugate to the elements of the form $[a,b]$, where the image of $a$ is an involution in the quotient space $B_{n}/\langle x^{n}\rangle $ and $b$ is any other element in $B_{n}$. This is a sufficient condition to be a reversible element in $B_{n}$.
\end{rk}
\begin{rk}
	From the above theorem, we obtain that the same idea that we have used for Seifert-fibered spaces will not work for $B_{n}$. Furthermore, by \cite[Theorem 1.3]{HMT} and \thmref{4.2}, $B_n$ cannot be a fundamental group of any $3$-manifold. This has been proven earlier using different techniques, for example in \cite{Go} where they use the theory of coherent groups, however this gives a different perspective using reversible elements.
\end{rk}

\section{ Generalised 3-Torsion elements in $\pi_{1}(B_{3})$ }
\begin{defn}
	An element $g$ in the group $G$ is said to be a generalised $n$-torsion element if it satisfies the following condition:
	\[k_{1}gk_{1}^{-1}k_{2}gk_{2}^{-1}\dots k_{n}gk_{n}^{-1}=Id\] where, $k_{1},k_{2},\dots,k_{n}\in G.$
\end{defn}
Note that, a generalised $2$-torsion element is equivalent to a reversible element by definition. This property is invariant over conjugacy and inversion. In this section, we study the generalised $3$-torsion elements in the braid group with $3$-strains $B_{3}$, and provide a sufficient condition for an element to be a $3$-torsion element. To do that, we first check the sufficient conditions for the generalised $3$-torsion elements in $\rm PSL(2,\z)$.

\begin{lemma}\label{5.2}
	If $g\in \rm PSL(2,\z) $ is a generalised $3$-torsion element then, $g$ is a product of $3$-torsion elements. 
\end{lemma}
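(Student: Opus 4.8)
The plan is to exploit the free product structure $\rm PSL(2,\z)\cong\mathbb{Z}_{2}*\mathbb{Z}_{3}=\langle a,b\mid a^{2},b^{3}\rangle$, in which the $3$-torsion elements are precisely the conjugates of $b$ and $b^{2}$ (recall that every torsion element of a free product is conjugate into a free factor). The key observation is that ``being a product of $3$-torsion elements'' is a subgroup condition detected by a single parity homomorphism, and that the generalised $3$-torsion relation forces $g$ into that subgroup.

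First I would introduce the homomorphism $\epsilon\colon\rm PSL(2,\z)\to\mathbb{Z}_{2}$ defined by $a\mapsto 1$, $b\mapsto 0$; this is well defined since the relators $a^{2}$ and $b^{3}$ die in $\mathbb{Z}_{2}$. Suppose $g$ is a generalised $3$-torsion element, so that $k_{1}gk_{1}^{-1}k_{2}gk_{2}^{-1}k_{3}gk_{3}^{-1}=Id$ for suitable $k_{i}$. Applying $\epsilon$ and using that $\mathbb{Z}_{2}$ is abelian, so that each conjugate $k_{i}gk_{i}^{-1}$ has the same image as $g$, the relation collapses to $3\,\epsilon(g)=0$ in $\mathbb{Z}_{2}$, whence $\epsilon(g)=0$. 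Thus $g$ lies in $\ker\epsilon$, the index-$2$ subgroup consisting of the elements whose total $a$-exponent is even.

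Next I would identify $\ker\epsilon$ with the normal closure $\langle\langle b\rangle\rangle$ of $b$. Quotienting by this normal closure yields the presentation $\langle a,b\mid a^{2},b^{3},b\rangle\cong\langle a\mid a^{2}\rangle\cong\mathbb{Z}_{2}$, and the resulting quotient map is exactly $\epsilon$; hence $\ker\epsilon=\langle\langle b\rangle\rangle$. Since the normal closure of $b$ is generated by the conjugates $wbw^{-1}$, each of which is an order-$3$ element, every element of $\ker\epsilon$ is a product of $3$-torsion elements. Combining this with the previous step, $g\in\ker\epsilon$ is a product of $3$-torsion elements, which is the claim.

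The only point requiring care is the identification $\ker\epsilon=\langle\langle b\rangle\rangle$: one must verify that the conjugates of $b$ generate the whole index-$2$ subgroup rather than a proper part of it. This follows cleanly from the quotient-presentation argument above, and equivalently from a Reidemeister--Schreier computation with transversal $\{1,a\}$ showing $\ker\epsilon\cong\mathbb{Z}_{3}*\mathbb{Z}_{3}$ with free factors generated by $b$ and $aba^{-1}$, both of which are $3$-torsion. I therefore expect no substantive obstacle beyond setting up this index-$2$ subgroup correctly.
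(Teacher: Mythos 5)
Your proof is correct, and it takes a cleaner route through the second half of the argument than the paper does. Both proofs hinge on the same first step: pushing the relation $k_1gk_1^{-1}k_2gk_2^{-1}k_3gk_3^{-1}=Id$ into an abelian quotient to kill the $\mathbb{Z}_2$-part of $g$ (the paper abelianises to $\mathbb{Z}_2\times\mathbb{Z}_3$ and deduces $[g]^3=Id$, which is the same parity constraint your $\epsilon$ extracts). Where you diverge is in justifying the decomposition: the paper asserts, rather tersely, that such a $g$ has reduced form $\Pi_{i}k_i[a,b]k_i^{-1}\Pi_j b_j$ with $|b_j|=3$ (implicitly using that the commutator subgroup is normally generated by $[a,b]$) and then factors $[a,b]=(aba^{-1})b^{-1}$ as a product of two $3$-torsion elements; you instead identify $\ker\epsilon$ outright with the normal closure $\langle\langle b\rangle\rangle$ via the quotient presentation $\langle a,b\mid a^2,b^3,b\rangle\cong\mathbb{Z}_2$, so that membership in $\ker\epsilon$ immediately means being a product of conjugates of $b^{\pm1}$, each of order $3$. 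Your version buys rigor and economy — the normal-closure identification (reinforced by the Reidemeister--Schreier computation $\ker\epsilon\cong\mathbb{Z}_3*\mathbb{Z}_3$) closes the gap left by the paper's unproved ``reduced form'' claim — while the paper's version buys a slightly more explicit normal form in terms of conjugates of $[a,b]$ and powers of $b$, which loosely echoes the word-form analysis it carries out afterwards for parabolic and hyperbolic elements. Both establish the lemma; yours would be the easier one to referee.
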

\begin{proof}
    Let us consider the presentation of $\rm{ PSL(2,\z)}=$ $\langle a,b~|~a^{2},b^{3}\rangle$.
	In $\rm PSL(2,\z)$, let $g$ be a generalised $3$-torsion element. If we abelianise $\rm PSL(2,\z)$, then $[g]^{3}=Id$. That means, $g$ is a $3$-ordered element or it is a product of $3$-ordered elements in $\rm PSL(2,\mathbb{Z})$. The commutator $[a,b]$ of $a,b$ is also a product of two 3-ordered elements. Then, the reduced form of $g$ is $\Pi_{i=1}^{m}k_{i}[a,b]k_{i}^{-1}\Pi_{j=1}^{n} b_{i}$ where, $b_i,k_{i}\in \rm PSL(2,\z)$ and $|b_{j}|=3$. 
\end{proof}

If $g$ is an elliptic element in $\rm PSL(2,\mathbb{Z})$, then it is known to be of order either $2$ or $3$. Thus, by the previous lemma, a generalised $3$-torsion element which is also an elliptic element must be of order $3$, since the product of three ordered elements cannot be an involution.

Therefore, we can now assume that $g\in \rm PSL(2,\z)$ is a non-elliptic generalised $3$-torsion element i.e., there are $h,k\in \rm PSL(2,\z)$ such that 
\[ghgh^{-1}kgk^{-1} =Id\]and also,  $g$ is conjugate to $ab^{k_{1}}ab^{k_{2}}\dots ab^{k_{n}}$ where, $k_{i}=\pm 1$. Since the property of generalised $3$-torsion element is invariant over conjugacy, it is sufficient to consider\begin{equation}\label{5.1}
	g=ab^{k_{1}}ab^{k_{2}}\dots ab^{k_{n}}
\end{equation} where, $k_{i}=\pm 1$ and $1\leq k_{i}\leq n$.

\begin{lemma}\label{5.4}
	In $\rm PSL(2,\mathbb{Z})$, there are only two parabolic elements which are generalised $3$-torsion elements upto conjugacy, $(ab)^{\pm2}$.
\end{lemma}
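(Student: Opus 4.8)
The plan is to combine the free-product normal form in $\mathrm{PSL}(2,\z)\cong\z_{2}*\z_{3}$ with a trace computation in $\mathrm{SL}(2,\z)$. By the discussion preceding the lemma, a non-elliptic generalised $3$-torsion element is conjugate to $g=ab^{k_{1}}\cdots ab^{k_{n}}$ with $k_{i}=\pm1$. With the standard identification $a=S$, $b=ST$, one has in $\mathrm{PSL}(2,\z)$ that $ab=R$ and $ab^{-1}=L$, where $R=\left(\begin{smallmatrix}1&1\\0&1\end{smallmatrix}\right)$, $L=\left(\begin{smallmatrix}1&0\\1&1\end{smallmatrix}\right)$. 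Hence each block $ab^{k_{i}}$ is either $R$ or $L$, so $g$ is a positive length-$n$ word in $L,R$. Since a cyclically reduced positive $L$--$R$ word has $|\mathrm{tr}|>2$ as soon as both letters occur, $g$ is parabolic precisely when it is a pure power; using $L^{n}\sim R^{-n}=(ab)^{-n}$, the parabolic candidates up to conjugacy are exactly $g=(ab)^{n}$, $n\neq0$. Abelianising as in \lemref{5.2} sends $(ab)^{n}$ to $5n\in\z_{6}$, and a generalised $3$-torsion element must map to an element killed by $3$; this forces $n$ to be even, say $n=2m$.

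It then remains to decide for which $m$ the element $(ab)^{2m}$ is a product of three of its conjugates equal to the identity. I would lift $(ab)^{2m}$ to $\tilde g=I+2mE\in\mathrm{SL}(2,\z)$ with $E=\left(\begin{smallmatrix}0&1\\0&0\end{smallmatrix}\right)$ (an even power has trace $+2$, so this is the correct lift). Every conjugate is $U_{i}=I+2mN_{i}$ with $N_{i}=v_{i}u_{i}^{T}$ a primitive integral nilpotent in the direction $v_{i}$, $u_{i}=Jv_{i}$. Setting $x=\det(v_{1},v_{2})$, $y=\det(v_{2},v_{3})$, $z=\det(v_{3},v_{1})$, a direct expansion gives
\[
\mathrm{tr}(U_{1}U_{2}U_{3})=2-(2m)^{2}(x^{2}+y^{2}+z^{2})+(2m)^{3}xyz.
\]
If $(ab)^{2m}$ is generalised $3$-torsion then $U_{1}U_{2}U_{3}=\pm I$, so the trace is $\pm2$, yielding either
\[
x^{2}+y^{2}+z^{2}=2m\,xyz \qquad (\text{product }=I)
\]
or
\[
m^{2}\bigl(x^{2}+y^{2}+z^{2}-2m\,xyz\bigr)=1 \qquad (\text{product }=-I).
\]

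The second equation forces $m^{2}=1$, hence $|m|=1$. The first is a Markov--Hurwitz equation $x^{2}+y^{2}+z^{2}=k\,xyz$ with the even coefficient $k=2m$, which has no nontrivial integer solutions (the only admissible coefficients are $k=1,3$), so it never occurs. Therefore $|m|=1$, i.e. $g$ is conjugate to $(ab)^{\pm2}$. For the converse I would exhibit the witness realising $m=\pm1$: the triple $(x,y,z)=(1,1,1)$ solves the $-I$ equation, and corresponds to conjugating $(ab)^{2}$ by $a$ and by $ab^{2}$; one checks directly that $(ab)^{2}\cdot a(ab)^{2}a^{-1}\cdot(ab^{2})(ab)^{2}(ab^{2})^{-1}=1$, confirming that $(ab)^{\pm2}$ are indeed generalised $3$-torsion.

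The main obstacle is the number-theoretic input in the product-$=I$ case: one must know that $x^{2}+y^{2}+z^{2}=2m\,xyz$ has no nontrivial solutions for even coefficient, which is exactly the non-existence half of the Markov--Hurwitz classification and requires either a citation or a short descent/congruence argument. A secondary point demanding care is the $\pm I$ bookkeeping between $\mathrm{SL}$ and $\mathrm{PSL}$: one must verify that an even power lifts to a trace-$+2$ unipotent so that the two trace equations above are the only possibilities, and that the degenerate case of all directions $v_{i}$ parallel (all determinants zero) cannot give the identity.
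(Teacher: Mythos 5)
Your proposal is correct, but it reaches the conclusion by a genuinely different route than the paper. The paper's necessity argument is leaner: it uses only the weak consequence that $ghgh^{-1}=kg^{-1}k^{-1}$ must itself be parabolic, computes for a single unknown conjugator $h=\left(\begin{smallmatrix}a&b\\c&d\end{smallmatrix}\right)$ that $\mathrm{tr}(ghgh^{-1})=2-n^{2}c^{2}$, and concludes $n^{2}c^{2}=4$ (hence $|n|\leq 2$) with no external input; the case $|n|=1$ is then excluded exactly as in your abelianisation step, via \lemref{5.2}. You instead exploit the full symmetric identity $U_{1}U_{2}U_{3}=\pm I$ for three conjugates of the unipotent lift, parametrise conjugates of $I+2mE$ by primitive vectors, and reduce to the Hurwitz equation $x^{2}+y^{2}+z^{2}=2m\,xyz$; I verified your trace identity (up to an immaterial sign convention in $J$), your degenerate-case analysis ($x=0$ forces $y=z=0$ since $y^{2}+z^{2}=0$, and three parallel directions give $I+6mN\neq\pm I$), and your witness, which indeed yields $(ab)^{2}\cdot a(ab)^{2}a^{-1}\cdot(ab^{2})(ab)^{2}(ab^{2})^{-1}=-I$ in $\mathrm{SL}(2,\z)$, hence the identity in $\mathrm{PSL}(2,\z)$ --- the paper's witness is instead $h=b^{-1}$, $k=b$. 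What your approach buys is a conceptually uniform treatment of all three conjugates at once (and a framework that would extend to generalised $n$-torsion of unipotents); what it costs is the nonexistence half of the Markov--Hurwitz classification for even coefficient, which you correctly flag as the one unproved input --- it is classical and fillable by a short mod-$4$ congruence plus infinite descent, so this is a citable reliance rather than a genuine gap. Incidentally, your correctly executed computation exposes a typo in the paper's own proof: the displayed condition $|2-n^{2}c|=2$ (leading to the spurious branch $n=1,\ c=4$) should read $|2-n^{2}c^{2}|=2$, giving $|n|=1,\ |c|=2$ or $|n|=2,\ |c|=1$; the conclusion is unaffected since $|n|=1$ is excluded either way.
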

\begin{proof}
	There is only one type of parabolic conjugacy classes in $\rm PSL(2,\mathbb{Z)}$ i.e., $(ab)^{n}$, where, $n\in \z$.
	The matrix form of $(ab)^{n}$ is $\begin{bmatrix}
		1 &&n\\
		0&&1
	\end{bmatrix}$. Let $g=(ab)^{n}$ be a generalised $3$-torsion element i.e., there exists $h,k \in \rm PSL(2,\z)$ such that,
	\[ghgh^{-1}kgk^{-1}=Id\]
	\[\imp ghgh^{-1}=kg^{-1}k^{-1}.\]
	Thus, $ghgh^{-1}$ is a parabolic element, since any conjugate of $g$ or $g^{-1}$ needs to be parabolic. Therefore,
	$|trace(ghgh^{-1})|$ is 2. Suppose $h$ has the matrix form $ \begin{bmatrix}
		a&&b\\
		c&&d
	\end{bmatrix}.$ Let the matrix form of $kg^{-1}k^{-1}$ be denoted by $A$. Then,
	\[A=\begin{bmatrix}
		1&&n\\
		0&&1
	\end{bmatrix}\begin{bmatrix}
a&&b\\
c&&d
	\end{bmatrix}\begin{bmatrix}
	1&&n\\
	0&&1
	\end{bmatrix}\begin{bmatrix}
	a&&b\\
	c&&d
	\end{bmatrix}^{-1}
	\]
	\[=\begin{bmatrix}
		1&&n\\
		0&&1
	\end{bmatrix}\begin{bmatrix}
		a&&b\\
		c&&d
	\end{bmatrix}\begin{bmatrix}
		1&&n\\
		0&&1
	\end{bmatrix}\begin{bmatrix}
	d&&-b\\
		-c&&a
	\end{bmatrix}\]
	\[=\begin{bmatrix}
		a+nc&& b+nd\\
		c&&d
	\end{bmatrix}\begin{bmatrix}
	d-cn&&-b+an\\
	-c&&a
	\end{bmatrix} 
	\]
	\[=\begin{bmatrix}
		1-n^{2}c^{2}-acn&&acn^{2}+a^{2}n+n\\
		-c^{2}n&&1+acn	\end{bmatrix}.\]
	Since, $A$ is parabolic, this implies, $|trace(A)|=2.$ The obvious way to get $|trace(A)|=2$ is if we consider $c=0$, which will give us $h=g^{i}$ for some $i\in \z$. Then $A$ will be $g^{2}$, which cannot be true, since $g^{-1}$ is not conjugate to $g^2$.
	Now, we consider $c\neq 0.$ That means 
	
	\[|trace(A)|=2\]
	\[\imp|2-n^{2}c|=2\]
	\[\imp 2-n^{2}c=-2\]
	since the $ 2-n^{2}c=2$ case will be reduced to $c=0.$ So,
	\[ n^{2}c=4.\]
 That implies, either $ n=1,\ c=4 ~\hbox{or}~n^{2}=4,\ c=1$. Since $g$ cannot be seen as a product of $3$-torsion elements if we take $n=1$, therefore we conclude that $g$ may be one of $(ab)^{\pm 2}.$

Without loss of generality, let us choose $g$ to be $(ab)^{2}.$ This implies, $g=abab.$
Also note that, $ababb^{-1}abab^{-1}baba=Id.$ Therefore, $ghgh^{-1}kgk^{-1}=Id$ where, $h=b^{-1}$ and $k=b.$ This verifies that $g$ is indeed a generalised $3$-torsion element.\end{proof}
\\
In the proof of the previous lemma, we have obtained that a parabolic element is the product of two 3-ordered elements. Therefore, let us now check if this is also true for hyperbolic elements.
\begin{lemma}\label{5.5}
	Let $g$ be a hyperbolic element in $\rm PSL(2,\z)$ and $g$ can be seen as a product of two 3-ordered elements i.e., $g=b_{1}b_{2}$, where $|b_{1}|=|b_{2}|=3.$ Then, $g$ is a generalised 3-ordered torsion element.
	
\end{lemma}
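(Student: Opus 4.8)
The goal is to exhibit three conjugating elements realising the defining relation of a generalised $3$-torsion element for $g$. Since this property is invariant under conjugacy, I may normalise the first conjugator to be trivial, so it suffices to produce $h,k\in \rm PSL(2,\z)$ with $g\,hgh^{-1}\,kgk^{-1}=Id$. The plan is to read off $h$ and $k$ directly from the given factorisation $g=b_{1}b_{2}$ with $b_{1}^{3}=b_{2}^{3}=Id$, in exactly the same spirit as the parabolic computation of \lemref{5.4}.

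The key step is the symmetric choice $h=b_{2}^{-1}$ and $k=b_{2}$. With this choice one has $hgh^{-1}=b_{2}^{-1}b_{1}b_{2}^{2}$ and $kgk^{-1}=b_{2}b_{1}$, so that
\[ g\,hgh^{-1}\,kgk^{-1}=(b_{1}b_{2})(b_{2}^{-1}b_{1}b_{2}^{2})(b_{2}b_{1}). \]
I would then simplify by telescoping: the adjacent $b_{2}b_{2}^{-1}$ cancels, the resulting $b_{2}^{2}b_{2}$ collapses via $b_{2}^{3}=Id$, and what remains is $b_{1}^{2}\cdot b_{1}=b_{1}^{3}=Id$ by the order of $b_{1}$. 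This verifies the generalised $3$-torsion relation with $k_{1}=Id$, $k_{2}=b_{2}^{-1}$, $k_{3}=b_{2}$.

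Two remarks on why this works and where care is needed. First, this is the natural generalisation of \lemref{5.4}: the parabolic element $(ab)^{2}$ factors as $(aba)\cdot b$ with both factors of order $3$, and the reversers $h=b^{-1}$, $k=b$ used there are precisely $b_{2}^{-1}$, $b_{2}$ in the present notation. Second, hyperbolicity of $g$ plays no role in the algebra itself; it only guarantees that $g$ has infinite order, so that the conclusion produces a genuine non-elliptic generalised torsion element rather than an ordinary torsion element. The only genuinely creative point---the ``obstacle,'' such as it is---is spotting the symmetric choice of conjugators that makes the product collapse; once that choice is made, the verification is the one-line cancellation above, using only the relations $b_{1}^{3}=b_{2}^{3}=Id$.
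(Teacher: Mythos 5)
Your proof is correct and is essentially identical to the paper's: the same choice $h=b_{2}^{-1}$, $k=b_{2}$, and the same telescoping cancellation via $b_{1}^{3}=b_{2}^{3}=Id$. Your added remarks (the link to the parabolic case $(ab)^{2}=(aba)\cdot b$ and the observation that hyperbolicity is not used in the algebra) are accurate but inessential.
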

\begin{proof}
	$g=b_{1}b_{2}$ this implies, 
	\[b_{1}b_{2}b^{-1}_{2}b_{1}b^{-1}_{2}b_{2}b_{1}=Id\]
	\[\imp gb^{-1}_{2}gb_{2} b_{2}gb^{-1}_{2}=Id.\]
	Thate means,  $ghgh^{-1}kgk^{-1}=Id$ where, $h=b_{2}^{-1}$ and $k=b_{2}.$ Hence, this proves the lemma.
\end{proof}

\begin{theorem}
    Suppose $g$ be an element in $\rm PSL(2,\z)$, it is a generalized $3$-torsion element, if any of the followings hold:
	\begin{enumerate}
		\item  $g$ is an elliptic element, then $g$ must be order of $3.$
		\item  $g$ is a parabolic element, then $g$ is conjugate to any of $(ab)^{\pm 2}.$
		\item $g$ is a hyperbolic element, then $g$ is conjugate to an element which is product of two $3$-ordered elements. 
	\end{enumerate}
\end{theorem}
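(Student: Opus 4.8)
The plan is to prove each of the three cases separately, using the trichotomy of elements of $\rm PSL(2,\z)$ into elliptic, parabolic, and hyperbolic, and invoking the lemmas already in place. Since every element of $\rm PSL(2,\z)$ belongs to exactly one of these classes, it suffices to describe, within each class, the shape that a generalised $3$-torsion element must take. The global input is \lemref{5.2}, which guarantees that any generalised $3$-torsion $g$ is assembled out of order-$3$ elements (its reduced form is $\prod_i k_i[a,b]k_i^{-1}\prod_j b_j$ with $|b_j|=3$, and $[a,b]$ is itself a product of two order-$3$ elements), together with the observation that abelianising the defining relation $k_1gk_1^{-1}k_2gk_2^{-1}k_3gk_3^{-1}=1$ forces $[g]^3=1$ in $\rm PSL(2,\z)^{ab}\cong \z/6$.

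For the elliptic case I would first recall that in $\rm PSL(2,\z)\cong \z_2*\z_3$ every finite-order element is conjugate to a power of $a$ or of $b$, so an elliptic $g$ has order $2$ or $3$. The relation $[g]^3=1$ places the class $[g]$ in the $\z/3$ summand of $\z/6$, whereas an involution (a conjugate of $a$) maps to the order-$2$ generator; these are incompatible, so the order-$2$ possibility is ruled out and $g$ must have order $3$. This is exactly the paragraph of reasoning recorded immediately after \lemref{5.2}, now packaged as a case of the theorem.

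For the parabolic case the statement is precisely \lemref{5.4}: the only parabolic conjugacy classes are the $(ab)^n$, and the trace computation there shows the defining relation can be satisfied only for $n=\pm 2$, with the explicit witnesses $h=b^{-1}$ and $k=b$. For the hyperbolic case I would invoke \lemref{5.5}, which supplies the sufficient direction: once a hyperbolic $g$ is written as $b_1b_2$ with $|b_1|=|b_2|=3$, the identity $gb_2^{-1}gb_2\,b_2gb_2^{-1}=1$ exhibits it as a generalised $3$-torsion element. Assembling these three inputs over the trichotomy completes the argument.

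The main obstacle is the hyperbolic case, and more precisely the converse that a full characterisation would demand: showing that \emph{every} hyperbolic generalised $3$-torsion element is conjugate to a product of two order-$3$ elements, rather than only that such products are generalised $3$-torsion. \lemref{5.5} gives only the latter. Closing this gap would mean starting from the reduced form produced by \lemref{5.2} and proving, for hyperbolic $g$, that the factorisation collapses up to conjugacy to exactly two order-$3$ factors; controlling the length of this reduced word in the free product $\z_2*\z_3$ is where the genuine difficulty lies. For the implications as literally stated (elliptic $\Rightarrow$ order $3$, parabolic $\Rightarrow (ab)^{\pm 2}$, and product of two order-$3$ elements $\Rightarrow$ hyperbolic generalised $3$-torsion), however, the theorem is an immediate assembly of \lemref{5.2}, \lemref{5.4}, and \lemref{5.5}.
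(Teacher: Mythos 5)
Your proposal is correct and takes essentially the same route as the paper, whose proof likewise handles the elliptic case by the discussion following \lemref{5.2} (the product of order-$3$ elements cannot be an involution, which your abelianisation argument repackages), cites \lemref{5.4} for the parabolic case, and cites \lemref{5.5} for the hyperbolic case. Your observation that \lemref{5.5} gives only the sufficiency direction for hyperbolic elements is accurate, and that same asymmetry is present in the paper's proof as well.
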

\begin{proof}
	$(1)$ is easy to see from the above discussion.
	$(2)$ follows from \lemref{5.4} and $(3)$ follows from \lemref{5.5}.
\end{proof}

Notice that the condition of the hyperbolic elements obtained in the previous theorem holds true for any other Fuchsian group. This motivates us to see the generalised $3$-torsion elements in the universal central extension of Fuchsian groups, i.e., in Seifert fibered groups.

\begin{pro}
 	Let us consider a Seifert-fibered space $M$ with fundamental group \eqref{fo} with some elements $c_{1}$ and $c_{2}$ such that $c_{1}^{3p}=h^{\mu_{1}}$ and $c_{2}^{3q}=h^{\mu_{2}}$ for some $p,q\in \mathbb{N}$. Then $e_{1}^{p}e_{2}^{q}h^{x}$ is a generalised $3$-torsion element if $3x+\mu_{1}+\mu_{2}=0$, where $e_{i}$ are conjugate to $c_{i}$.
 \end{pro}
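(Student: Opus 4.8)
The plan is to imitate the computation behind \lemref{5.5} inside $\pi_1(M)$, keeping careful track of the powers of the fibre $h$ that appear. Write $b_1=e_1^{p}$ and $b_2=e_2^{q}$, so the candidate element is $g=b_1b_2h^{x}$. Passing to the Fuchsian quotient $\pi_1(M)/\langle h\rangle$, the relation $c_i^{3p}=h^{\mu_i}$ becomes $[c_i]^{3p}=1$, so $[b_1]$ and $[b_2]$ have order dividing $3$ and $[g]=[b_1][b_2]$ is a product of two $3$-torsion elements. By the argument of \lemref{5.5} (which, as remarked above, is valid in any Fuchsian group), $[g]$ is already a generalised $3$-torsion element downstairs; the real task is therefore to lift the relation to $\pi_1(M)$ and to check that the lifting error is exactly the central power $h^{3x+\mu_1+\mu_2}$.

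Concretely, I would reuse the reversing data of \lemref{5.5}: take $k_1=1$, $k_2=b_2^{-1}$ and $k_3=b_2$, and evaluate the generalised $3$-torsion word
\[
k_1gk_1^{-1}\,k_2gk_2^{-1}\,k_3gk_3^{-1}=g\cdot(b_2^{-1}gb_2)\cdot(b_2gb_2^{-1}).
\]
Substituting $g=b_1b_2h^{x}$ and simplifying the two conjugates gives $b_2^{-1}gb_2=b_2^{-1}b_1b_2^{2}h^{x}$ and $b_2gb_2^{-1}=b_2b_1h^{x}$, so that the word becomes $(b_1b_2h^{x})(b_2^{-1}b_1b_2^{2}h^{x})(b_2b_1h^{x})$.

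The commutation with the fibre is what makes the telescoping legitimate. Since $c_ihc_i^{-1}=h$ in the presentation of $\pi_1(M)$, each $c_i$ commutes with $h$; and if $e_i=wc_iw^{-1}$ then $e_ihe_i^{-1}=wc_ih^{\phi(w)}c_i^{-1}w^{-1}=h^{\phi(w)^{2}}=h$, because $\phi(w)^{2}=1$. Hence $e_i$, and so $b_i$, also commute with $h$. Using this I can push every $h^{x}$ to the right to collect $h^{3x}$, and the $b$-part telescopes as $b_1(b_2b_2^{-1})b_1b_2^{3}b_1=b_1^{2}b_2^{3}b_1$. Finally $b_2^{3}=e_2^{3q}=h^{\mu_2}$ and $b_1^{3}=e_1^{3p}=h^{\mu_1}$ are central, so the whole word collapses to $h^{\mu_1}h^{\mu_2}h^{3x}=h^{3x+\mu_1+\mu_2}$, which is trivial precisely when $3x+\mu_1+\mu_2=0$.

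The telescoping itself is routine; the step needing care is the bookkeeping of $h$. In particular, the identities $b_i^{3}=h^{\mu_i}$ are literally the hypotheses only for $e_i=c_i$, whereas a general conjugate $e_i=w_ic_iw_i^{-1}$ yields $e_i^{3p}=h^{\phi(w_i)\mu_i}$, so one should either restrict to conjugators in $\ker\phi$ (automatic when $\phi$ is trivial) or record the resulting sign in the hypothesis. Since being a generalised $3$-torsion element is conjugacy invariant, verifying the relation for one admissible choice of the $e_i$ suffices, and the displayed word exhibits the witnesses $k_1,k_2,k_3$ explicitly.
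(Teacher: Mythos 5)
Your proof is correct and takes essentially the same route as the paper's: the paper uses exactly the same witnesses $h_1=e_2^{-q}$, $k=e_2^{q}$ (your $k_1=1$, $k_2=b_2^{-1}$, $k_3=b_2$) and the same telescoping computation, pushing the central powers of $h$ to the right so the word collapses to $h^{3x+\mu_1+\mu_2}$. Your extra bookkeeping is in fact more careful than the paper's proof: you verify explicitly that a conjugate $e_i=w_ic_iw_i^{-1}$ commutes with $h$ via $\phi(w_i)^{2}=1$, and you correctly flag that in general $e_i^{3p}=h^{\phi(w_i)\mu_i}$, so one should take conjugators in $\ker\phi$ (or absorb the sign into the hypothesis) --- a point the paper passes over silently when it writes $e_1^{3p}=h^{\mu_1}$.
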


 \begin{proof}
 Consider two elements $c_1$ and $c_2$ in $M$ where $c_{1}^{3p}=h^{\mu_{1}}$ and $c_{2}^{3q}=h^{\mu_{2}}$ for some $p,q\in \mathbb{N}$ and $\mu_1, \mu_2 \in \mathbb{Z}$. We know that in the Fuchsian group $\pi_1({M})/\langle h\rangle$, the product of two $3$-ordered elements are generalised $3$-torsion elements. 
  
 Now we take one of the lifts of $e_{1}^{p}e_{2}^{q}$ in $\pi_{1}(M)$, where $e_{i}$ conjugate to $c_{i}$ for $i=1,2$, and let $g=e_{1}^{p}e_{2}^{q}h^{x}$, where $x\in \z.$ Consider $h_{1}=e_{2}^{-q}$ and $k=e_{2}^{q}$. If $g$ satisfies the condition \[ gh_{1}gh_{1}^{-1}kgk^{-1}=Id,\] 
 then \[ e_{1}^{p}e_{2}^{q}h^{x}e_{2}^{-q}e_{1}^{p}e_{2}^{q}h^{x}e_{2}^{q}e_{2}^{q}e_{1}^{p}e_{2}^{q}h^{x}e_{2}^{-q}=Id\]
 \[\imp e_{1}^{p}h^{x}e_{1}^{p}h^{x}e_{2}^{3q}e_{1}^{p}h^{x}=Id\]
 \[\imp e_{1}^{3p} h^{3x+\mu_{2}}=Id.\]
 
 Since, $h$ has infinite order in $\pi(M)$, $3x+\mu_{1}+\mu_{2}=0.$ In that case, $g$ is a generalised $3$-torsion element in $\pi_1(M)$.
 \end{proof}

\begin{rk}
    We can find some sufficient conditions for the existence of generalised $n$-torsion elements in a Seifert-fibered space following similar arguments as above. In particular, if there exist elements $c_i$ for $i=1,2$ such that $c_i^{np_i}=h^{\mu_i}$ in the fundamental group, then any conjugate of $c_1^{p_1}c_2^{p_2}h^x$ is a generalised $n$-torsion element if $nx + \mu_1 + \mu_2 =0$.
\end{rk}
 
 In the particular case of the braid group $B_{3}$, we now have the following corollary. 
 \begin{cor}
 	Consider $M=S^3-K$ where $K$ is the trefoil knot, and hence $\pi_1(M)=B_3=\langle c_1,c_2,h|c_1^2=c_2^3=h, c_ih=hc_i\rangle$. Then the elements which are conjugate to $e_ie_j^2h^{-1}$, where  $e_i$'s are distinct and conjugate to $c_2$, are generalised $3$-torsion elements.  
 \end{cor}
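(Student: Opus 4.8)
The plan is to reduce the claim to the Proposition proved just above, whose computation relies only on two facts: that $h$ is central and that each factor has a power lying in $\langle h\rangle$. First I would record the basic observation that every conjugate of $c_2$ cubes to $h$: if $e=gc_2g^{-1}$, then, since $h=c_2^3$ is central, $e^3=gc_2^3g^{-1}=ghg^{-1}=h$. In particular the two distinct conjugates $e_i,e_j$ of $c_2$ in the statement satisfy $e_i^{3}=h$ and $e_j^{6}=(e_j^3)^2=h^2$.

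Next I would match these relations to the hypotheses of the Proposition. Taking $e_i$ in the role of the first element with $p=1$ (so $\mu_1=1$) and $e_j$ in the role of the second with $q=2$ (so $\mu_2=2$), the Proposition asserts that $e_i^{\,1}e_j^{\,2}h^{x}$ is a generalised $3$-torsion element precisely when $3x+\mu_1+\mu_2=0$. Here $3x+1+2=0$ forces $x=-1$, which is exactly the element $e_ie_j^2h^{-1}$ of the statement. Since being a generalised $3$-torsion element is invariant under conjugation, the same holds for every conjugate of $e_ie_j^2h^{-1}$.

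To keep the argument self-contained I would also exhibit the witnessing identity directly rather than merely quoting the Proposition. Writing $g=e_ie_j^2h^{-1}$ and using the conjugators $h_1=e_j^{-2}$ and $k=e_j^{2}$, one computes
\[
gh_1gh_1^{-1}kgk^{-1}= e_i e_j^2 h^{-1} e_j^{-2} e_i e_j^2 h^{-1} e_j^2 e_j^2 e_i e_j^2 h^{-1} e_j^{-2};
\]
pulling the three central factors $h^{-1}$ to the front and cancelling the adjacent powers of $e_j$, this collapses to $h^{-3}e_i^2 e_j^{6} e_i = h^{-3}h^{2}e_i^{3}=h^{-3}h^{2}h=\mathrm{Id}$. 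Equivalently, one may first simplify $e_ie_j^2h^{-1}=e_ie_j^{-1}$ (using $h^{-1}=e_j^{-3}$) and verify the relation in that shorter form.

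The computation is routine; there is no deep obstacle. The only points requiring care are (i) observing that the exponent pattern $e_i^{1}e_j^{2}$ corresponds to the choice $q=2$, hence $\mu_2=2$ rather than $1$, so that the balance condition $3x+\mu_1+\mu_2=0$ admits the integer solution $x=-1$; and (ii) noting that the Proposition applies even though $e_i$ and $e_j$ are both conjugates of the single generator $c_2$, since its proof uses only the cube relations and the centrality of $h$. The hypothesis that the $e$'s are distinct is used only to guarantee that $g=e_ie_j^{-1}$ is non-trivial, so that it is a genuine generalised torsion element.
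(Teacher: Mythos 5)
Your proof is correct and takes essentially the same approach as the paper: the paper's own proof of this corollary is exactly the Proposition's computation specialized to $e_1^{3}=e_2^{3}=h$ (i.e.\ $p=1$, $q=2$, $\mu_1=1$, $\mu_2=2$), using the same conjugators $h_1=e_j^{-2}$, $k=e_j^{2}$ and arriving at the same balance condition $3x+3=0$, hence $x=-1$. Your side remarks — that conjugates of $c_2$ cube to $h$ by centrality, and that $e_ie_j^{2}h^{-1}=e_ie_j^{-1}$ — are accurate refinements, while the paper additionally records that the alternative lift $e_1e_2h^{x}$ fails because $3x+2=0$ has no integer solution; the core argument is identical.
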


\begin{proof}
Let us choose two $3$-ordered elements $[ e_{1}],[e_{2}]$ in the quotient group $B_{3}/\langle h\rangle$, conjugate to $[c_2].$
 	So, in $B_{3}$,
 	$e_{1}^{3}=h=e_{2}^{3}.$ In $\rm PSL(2,\z)$ we have noticed that the product of two $3$-ordered elements are generalised $3$-torsion elements. 
 	
 	So, we have two choices of generalised 3-torsion elements. For the first case,
 	we take one of the lifts of $e_{1}e_{2}$, say $g=e_{1}e_{2}h^{x}$, where $x\in \z$. Let g satisfy the condition \[ gh_{1}gh_{1}^{-1}kgk^{-1}=Id,\] and let us choose $h_{1}=e_{2}^{-1}$ and $k=e_{2}$. Then,
 	\[ e_{1}e_{2}h^{x}e_{2}^{-1}e_{1}e_{2}h^{x}e_{2}e_{2}e_{1}e_{2}h^{x}e_{2}^{-1}=Id\]
 	\[\imp e_{1}h^{x}e_{1}h^{x}e_{2}^{3}e_{1}h^{x}=Id\]
 	\[\imp h^{3x+2}=Id.\]
 	This case is not possible, since $h$ has infinite order and there is no integer $x$ such that $3x+2=0.$
 	For the next case we may consider $g$ as a lift of $e_{1}e_{2}^2$, i.e., $g=e_{1}e_{2}^{2}h^{x}.$ Now, if we continue the same process as previous case, we obtain
 	$h^{3x+3}=Id.$
 	Therefore, for $x=-1$, $g$ will be a generalised $3$-torsion element.
 \end{proof}


\begin{thebibliography}{widestlevel}
	
\bibitem[Br]{Br} M. G. Brin.
\newblock{\emph{Seifert Fibered Spaces: Notes for a course given in the Spring of 1993}.}
\newblock	{arXiv preprint arXiv:0711.1346}
	
	
	
	
	
	\bibitem[BR]{BR} M. Baake and J. A. G.  Roberts, 
	\newblock {\em The structure of reversing symmetry groups}.
	\newblock Bull. Austral. Math. Soc. 73 (2006), no. 3, 445--459.
	
	
	
	
	\bibitem[DG]{DG}
	D. Das and K. Gongopadhyay, \emph{Reciprocity in the Hecke groups}, New York J. Math. {\bf 30} (2024), 722--744; MR4748467







 
 \bibitem[Go]{Go}C. M. Gordon. \newblock{\emph{Artin groups, 3-manifolds and coherence.} Boletín de la Sociedad Matemática Mexicana, Vol. 10, (2004), 193-198.}
 \newblock{\url{https://web.ma.utexas.edu/users/gordon/artin-dedicated.pdf}}
 
 
 
 \bibitem[HMT]{HMT} K. Himeno, K. Motegi, and M. Teragaito. \emph{Classification of generalized torsion elements of order two in 3–manifold groups.} \url{https://sites.google.com/izt.uam.mx/citaandppicta2023/home}
  
  
  
  \bibitem[JS]{JS}{W. H. Jaco, P. B. Shalen. \emph{Seifert Fibered Spaces in Three Manifolds:} Memoirs Series No. 220 (Memoirs of the American Mathematical Society; v. 21, no. 220)}



\bibitem[KR]{KR}R. S. Kulkarni, F. Raymond.(1-MI)
\emph{3-dimensional Lorentz space-forms and Seifert fiber spaces}. J. Differential Geom.21(1985), no.2, 231–268.
   
  
 \bibitem[La]{La} J. S. W. Lamb,
 \newblock {\it Reversing symmetries in dynamical systems}. 
 \newblock J.  Phys. A 25 (1992), no. 4, 925--937. 	




\bibitem[LR]{LR}{
	{ K. B. Lee},
	{F. Raymond}.
	\emph{Seifert fiberings},
	{Mathematical Surveys and Monographs},
	{166},
	{American Mathematical Society, Providence, RI},
	{2010},
	{xx+396}.
}


 
 
 \bibitem[Ma]{Ma} B. Martelli.
 \newblock{\emph{Introduction to Geometric Topology}}
 \newblock{arXiv preprint arXiv:1610.02592 } 
 
 \bibitem[MK]{MK}K. Murasugi and B. I. Kurpita, 
 \newblock{\emph{A study of braids}}, 
 \newblock Mathematics and its Applications 484, Kluwer
 Academic Publishers, Dordrecht, 1999.
 
 
 
  \bibitem[ST]{ST} H. Seifert,  W. Threlfall.
 \newblock{\emph{Seifert and Threlfall: a textbook of topology}}
 \newblock{Pure Appl. Math., 89
 	Academic Press, Inc. [Harcourt Brace Jovanovich, Publishers], New York-London, 1980. xvi+437 pp.}











\end{thebibliography}
\end{document}